\author{Javier Cilleruelo}
\address{Instituto de Ciencias Matem\'aticas (CSIC-UAM-UC3M-UCM) and
Departamento de Matem\'aticas\\
Universidad Aut\'onoma de Madrid\\
28049, Madrid, Espa\~na} \email{franciscojavier.cilleruelo@uam.es}
\author{Rafael Tesoro}
\address{Departamento de Matem\'aticas\\
Universidad Aut\'onoma de Madrid\\
28049, Madrid, Espa\~na} \email{rafael.tesoro@estudiante.uam.es}
\title{Dense infinite $B_h$ sequences}
\date{2012}
\newtheorem{thm}{Theorem}[section]
\newtheorem{lem}[thm]{Lemma}
\newtheorem{prop}[thm]{Proposition}
\theoremstyle{definition}
\theoremstyle{remark}
\newtheorem{rem}[thm]{Remark}
\numberwithin{equation}{section}
\newcommand{\A}{\mathcal{A}}
\newcommand{\B}{\mathcal{B}}
\newcommand{\Z}{\mathbb{Z}}
\newcommand{\p}{{\mathfrak{p}}}
\newcommand{\q}{{\mathfrak{q}}}
\newcommand{\ud}{\,\mathrm{d}}
\begin{document}

\begin{abstract}
For $h=3$ and $h=4$ we prove the existence of  infinite $B_h$ sequences $\B$  with counting function
 $$\mathcal{B}(x)= x^{\sqrt{(h-1)^2+1}-(h-1) + o(1)}.$$ This result
extends a construction of I. Ruzsa for $B_2$ sequences.
\end{abstract}

\maketitle


\section{Introduction}

Let $h \ge 2$ be an integer. We say that a sequence $\B$ of positive
integers is a $B_h$ sequence if  all the sums
\[
     b_1 + \cdots + b_h, \qquad (b_k \in \mathcal{B}, \; 1 \le k \le h),
\]
are distinct subject to $b_1 \le b_2 \le \cdots \le b_h.$ The study
of the size of finite $B_h$ sets (or the growing of the counting
function of  infinite $B_h$ sequences) is a classic topic in
combinatorial number theory. We define
$$F_h(n)=\max\{|\B|:\ \B \text{ is } B_h,\ \B\subset [1,n]\}.$$
A trivial counting argument proves that $F_h(n)\le(C_h+o(1))
n^{1/h}$ for a constant $C_h$ (see \cite{Ci} and \cite{Gr} for
non trivial upper bounds for $C_h$) and consequently that
$\mathcal B(x)\ll x^{1/h}$ when $\mathcal B$ is an infinite $B_h$
sequence.

There are three algebraic constructions (\cite{BC}, \cite{S} and
\cite{GT}) of finite $B_h$ sets showing that $F_h(n)\ge
n^{1/h}(1+o(1))$. It is probably true that
$F_h(n)\sim n^{1/h}$ but this is an open problem, except for the case
$h=2$ for which Erd\H os and Turan \cite{ET} did prove that
$F_2(n)\sim n^{1/2}$. It is unknown whether $\lim_{n\to
\infty} F_h(n)/n^{1/h}$ exists for $h\ge 3$. For further
information about $B_h$ sequences see \cite[\S \ II.2]{HR} or
\cite{O}.

Erd\H{o}s conjectured for all $\epsilon >0$ the existence of an
infinite $B_h$ sequence $\B$ with counting function $\B(x) \gg
x^{1/h - \epsilon}$. It is believed that $\epsilon$ cannot be
removed from the last exponent,  however this has only been  proved
for $h$ even. On the other hand, the \emph{greedy} algorithm produces an infinite $B_h$ sequence
$\mathcal B$ with
\begin{equation} \label{eq:greedy-density}
    \mathcal{B}(x) \gg x^{\frac 1{2h-1}} \quad (h \ge 2).
\end{equation}
Until now the exponent $1/(2h-1)$ has been the largest known for the growth of a $B_h$ sequence when $h\ge 3$. For the case $h=2$,
Atjai, Koml\'{o}s and Szemer\'{e}di  \cite{AKS} proved that there exists a
$B_2$ sequence (also called a Sidon sequence)  with $\B(x)\gg (x\log
x)^{1/3}$, improving by a power of logarithm the lower bound
\eqref{eq:greedy-density}. So far the highest improvement of
\eqref{eq:greedy-density} for the case $h=2$  was achieved by Ruzsa
(\cite{R}). He constructed, in a clever way, an infinite Sidon
sequence $\mathcal B$ satisfying
\[
    \mathcal B(x)=x^{\sqrt{2} - 1 + o(1)}. 
\]

 Our aim is to adapt  Ruzsa's ideas to build  dense infinite $B_3$ and $B_4$ sequences and so improve the
lower bound \eqref{eq:greedy-density} for  $h = 3$ and $h=4$.
\begin{thm} \label{thm:our_thm} For $h=2,3,4$
    there is an infinite $B_h$ sequence $\mathcal B$ with counting function
    \[
        \mathcal{B}(x) = x^{\sqrt{(h-1)^2+1}-(h-1) + o(1)}.
    \]
\end{thm}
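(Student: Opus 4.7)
The plan is to adapt Ruzsa's construction of a dense infinite Sidon sequence to the cases $h=3$ and $h=4$. The central idea is to encode each prime $p$ as an integer $b_p$ so that any $B_h$ relation $b_{p_1}+\cdots+b_{p_h}=b_{q_1}+\cdots+b_{q_h}$ among these integers forces a multiplicative identity $p_1\cdots p_h=q_1\cdots q_h$ among the primes; by unique factorization this forces $\{p_i\}=\{q_j\}$ as multisets, so the original relation is trivial. Concretely, one fixes an irrational $\alpha>0$ and a base $g$ large relative to $h$, and for each prime $p$ in a suitably chosen pool $P$ one defines $b_p$ by extracting a block of the base-$g$ digits of $\alpha\log p$ (essentially $b_p\approx\lfloor g^T\{\alpha\log p\}\rfloor$ for a precision parameter $T$), arranged with enough padding between digit-blocks that $h$-fold sums produce no carries.

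For the $B_h$ property, suppose $b_{p_1}+\cdots+b_{p_h}=b_{q_1}+\cdots+b_{q_h}$ with all $p_i,q_j\in P$. The no-carry arrangement lets one read off the digits on both sides, which translates into
\[
    \bigl|\log(p_1\cdots p_h)-\log(q_1\cdots q_h)\bigr|\ll h\,g^{-T}.
\]
Choosing $T$ so that the right-hand side falls below $1/N^h$ (with $N$ the largest prime in $P$) forces the two products to agree as positive integers, and then unique factorization gives $\{p_i\}=\{q_j\}$. The density is obtained by choosing the pool $P$ more cleverly than ``all primes up to $N$'': in the spirit of Ruzsa's argument, one thins the primes, for instance by retaining only those $p$ for which $\{\alpha\log p\}$ lies in a short prescribed interval, which sharply reduces the range of values $b_p$ can take at the cost of reducing the pool size. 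Optimizing the scale $N$, the precision $T$ and the thinning parameter $s$ subject to $g^{T+s}\gtrsim N^h$ yields the exponent $\sqrt{(h-1)^2+1}-(h-1)$ for $h=2,3,4$. Passing from the finite construction to an infinite $B_h$ sequence is done by iterating the construction at a sequence of growing scales and concatenating, with parameters chosen so that cross-scale $h$-fold sums remain distinct.

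The main obstacle is the verification that the digit-level equality of $h$-fold sums forces the \emph{exact} multiplicative identity $\prod p_i=\prod q_j$, since this requires simultaneously controlling carries (at most $h$ summands per digit slot, so the base $g$ and padding must be chosen accordingly), floor errors (which accumulate over $2h$ summands and determine the required precision $T$), and the window in which the primes lie (which must be narrow enough that two distinct products cannot be too close in logarithm). For $h=2$ Ruzsa's original analysis handles this balance and delivers $\sqrt{2}-1$; for $h=3$ and $h=4$ the same scheme, with parameters adjusted to the extra summands, delivers $\sqrt{(h-1)^2+1}-(h-1)$, and the careful tracking of this three-way balance is the technical heart of the argument.
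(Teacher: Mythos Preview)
Your proposal misidentifies the mechanism that makes Ruzsa's construction work, and the gap is fatal for getting the exponent $\sqrt{(h-1)^2+1}-(h-1)$. You assert that a coincidence $b_{p_1}+\cdots+b_{p_h}=b_{q_1}+\cdots+b_{q_h}$ \emph{forces} the exact identity $p_1\cdots p_h=q_1\cdots q_h$. That is only true if the precision $T$ is taken so large that $g^{-T}\ll (p_1\cdots p_hq_1\cdots q_h)^{-1}$, i.e.\ $T$ on the order of $2h\log N$; but then $b_p$ has size roughly $N^{2h}$ and the density drops to $x^{1/(2h)+o(1)}$, worse than the greedy bound. In both Ruzsa's paper and the present one the precision is deliberately taken much smaller (roughly $T\asymp c_h\log N$ with $c_h=\sqrt{(h-1)^2+1}+(h-1)$), so the sequence $\{b_p\}$ is \emph{not} a $B_h$ sequence: nontrivial coincidences do occur. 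The entire technical content of the proof is to show that these ``bad'' coincidences are rare. This is done by treating $\alpha$ as a random parameter on $[1,2]$, writing down necessary Diophantine conditions that a bad $2l$-tuple imposes on $\alpha$ (Lemma~\ref{lem:badtuple-second-condition}), bounding the measure of such $\alpha$ (Lemma~\ref{medida}), and then counting tuples satisfying the residual arithmetic constraint via a lattice-point argument (Lemma~\ref{lem:visible} and Propositions~\ref{prop:E_4-in_average}--\ref{prop:E_8-in_average}). Only after averaging does one conclude that for almost every $\alpha$ the bad elements can be deleted without harming the density. The exponent $\sqrt{(h-1)^2+1}-(h-1)$ is exactly the threshold at which the expected number of bad tuples becomes summable; your write-up never touches this computation.

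Two further points. First, your ``thinning'' step (keeping only primes with $\{\alpha\log p\}$ in a short interval) is not part of Ruzsa's method and does not produce the claimed exponent; the actual construction rearranges digit blocks of $\alpha\log p$ (here, of $\alpha\theta(\p)$ for Gaussian primes $\p$) in reverse order with padding, so that elements from different scales interleave compatibly. Second, the sentence ``with parameters chosen so that cross-scale $h$-fold sums remain distinct'' hides precisely the problem: cross-scale sums are \emph{not} automatically distinct, and for $h\ge 3$ the case analysis needed to bound them (the proof of Lemma~\ref{inte}) is the hardest part of the paper and the reason the result is currently limited to $h\le 4$.
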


 The starting point in
Ruzsa's construction were the numbers $\log p$, $p$ prime, which
form an infinite Sidon set of \emph{real}
 numbers. Instead  we part from  the arguments of the Gaussian
 primes, which also have the same $B_h$ property with the additional advantage of being a bounded sequence.
This idea was suggested in \cite{CR} to simplify  the original
construction of Ruzsa and was written in detail for $B_2$ sequences
in \cite{M}.

We believe that the theorem can be extended to all $h$.
Indeed, except for Lemma 3.3 the proof presented here works for all $h \ge 2$. 
The technical details in Lemma 3.3 become significantly involved as $h$ increases and we are still looking for a proof for the general case. 

\section{The Gaussian arguments}

\par For each rational prime $p\equiv 1\pmod 4$ we consider the Gaussian prime $\p$ of $\Z[i]$ such that
\begin{equation*}
\p := a+bi, \qquad p=a^2+b^2, \quad a > b > 0,
\end{equation*}
so  the argument of $ \p$ defined by $\p=\sqrt p\ e^{2 \pi i \, \theta(\p)}$
is a real number in the interval $(0,1/8)$. We
will use several times through the paper the following lemma that can be seen as a
measure of the quality of the $B_h$ property of this sequence of
real numbers.
\begin{lem}\label{lem:desi} Let
$\p_1,\cdots, \p_h,\p'_1,\cdots ,\p'_h$ be distinct Gaussian primes
satisfying \\
$0<\theta(\p_r),\theta(\p'_r)<1/8,\ r=1,\cdots ,h$. The
following inequality holds:
$$
\left |\sum_{r=1}^h\left (\theta(\p_r)-\theta(\p'_r)\right)\right |>\frac 1{7 \, |\p_1\cdots \p_h\ \p'_1 \cdots \p'_h | }.
$$
\end{lem}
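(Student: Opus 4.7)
My plan is to transfer the problem from the real arguments $\theta(\p_r),\theta(\p'_r)$ to the Gaussian integers themselves. Set $P = \p_1 \cdots \p_h$ and $P' = \p'_1 \cdots \p'_h$. Combining the polar form $\p_r = \sqrt{p_r}\,e^{2\pi i\,\theta(\p_r)}$ for each factor gives
$$ P\,\overline{P'} \;=\; |P|\,|P'|\, e^{2\pi i\,\sigma}, \qquad \sigma \;:=\; \sum_{r=1}^h\bigl(\theta(\p_r)-\theta(\p'_r)\bigr). $$
Taking imaginary parts, $\operatorname{Im}(P\overline{P'}) = |P|\,|P'|\,\sin(2\pi\sigma)$. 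Since $P\overline{P'}\in\Z[i]$, the left-hand side is a rational integer, so if I can show it is non-zero then $|\sin(2\pi\sigma)| \ge 1/(|P||P'|)$.

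The heart of the argument is therefore to rule out $\operatorname{Im}(P\overline{P'}) = 0$. Suppose it vanished; then $P\overline{P'}$ would equal some $k \in \Z$, and taking norms in $\Z[i]$ would give $N(P)\,N(P') = k^2$. The restriction $0 < \theta(\q) < 1/8$ (equivalently $a>b>0$ in the decomposition $\q = a+bi$) picks exactly one Gaussian prime above each rational prime $p\equiv 1\pmod 4$, so $\q \mapsto N(\q)$ is injective on this region. Since the $2h$ Gaussian primes in the hypothesis are pairwise distinct, the rational primes $p_1,\ldots,p_h,p'_1,\ldots,p'_h$ are also pairwise distinct, and
$$ N(P)\,N(P') \;=\; p_1\cdots p_h\,p'_1\cdots p'_h $$
is a product of $2h$ distinct primes, hence not a perfect square. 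This contradiction yields $|\operatorname{Im}(P\overline{P'})| \ge 1$.

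To finish, I would apply the universal inequality $|\sin x| \le |x|$ to obtain $2\pi|\sigma| \ge |\sin(2\pi\sigma)| \ge 1/(|P||P'|)$. Since $2\pi < 7$ and $|P|\,|P'| = |\p_1\cdots\p_h\,\p'_1\cdots\p'_h|$, this gives
$$ |\sigma| \;\ge\; \frac{1}{2\pi\,|P|\,|P'|} \;>\; \frac{1}{7\,|\p_1\cdots\p_h\,\p'_1\cdots\p'_h|}, $$
which is exactly the claim. The only step I expect to require any real thought is the norm-based obstruction ruling out $\operatorname{Im}(P\overline{P'})=0$; the remainder is bookkeeping with polar coordinates. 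Incidentally, nothing in the argument depends on the size of $h$, so the lemma holds verbatim for every $h\ge 2$.
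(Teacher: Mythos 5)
Your proof is correct, and it follows the same overall strategy as the paper's (pass to the single Gaussian integer $P\overline{P'}$, rule out it being a real integer, then convert the discreteness of $\Z[i]$ into an angular lower bound), but your technical execution differs in two pleasant ways. First, to rule out $\operatorname{Im}(P\overline{P'})=0$ the paper invokes unique factorization in $\Z[i]$ directly (the factorizations of $P\overline{P'}$ and $\overline{P}P'$ cannot match since the $\p_r,\p'_r$ are pairwise distinct, non-conjugate, first-octant primes), whereas you take norms and observe that $N(P)N(P') = p_1\cdots p_h\,p'_1\cdots p'_h$ is squarefree and hence not a perfect square; this packages the same UFD fact more cleanly, with no need to track units or conjugate primes by hand. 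Second, to turn non-reality into a numerical bound the paper reduces $\sigma$ modulo $1$, bounds $\|\theta(P\overline{P'})\|$ below via $\frac{1}{2\pi}\arctan(1/|P\overline{P'}|)$, and then uses the ad hoc estimate $\arctan(1/x) > 0.99/x$; you instead use $\operatorname{Im}(P\overline{P'}) = |P||P'|\sin(2\pi\sigma)$ with $|\operatorname{Im}(P\overline{P'})| \ge 1$ and the universal inequality $|\sin x|\le|x|$, which sidesteps the mod-$1$ reduction entirely (since $\sin$ is already periodic), requires no numerical calibration, and in fact yields the marginally better constant $1/(2\pi)$ in place of $0.99/(2\pi)$. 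Your closing remark that the argument is independent of $h$ matches the paper's own observation that everything but Lemma 3.3 works for all $h\ge 2$.
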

\begin{proof} It is clear that
\begin{equation}\label{eq:desi-1}
\sum_{r=1}^h\left (\theta(\p_r)-\theta(\p'_r)\right) \equiv
\theta(\p_1\cdots \p_h\overline{\p'_1\cdots
\p'_h}) \pmod 1.
\end{equation}
Since $\Z[i]$ is a unique
factorization domain, all the primes are in the first octant and are
all distinct, the Gaussian integer $\p_1\cdots
\p_h\overline{\p'_1\cdots \p'_h}$ cannot be a real integer. Using this fact and the inequality $\arctan(1/x)>0.99/x$ for $x\ge\sqrt{5\cdot 13}$ we have
\begin{eqnarray} \label{eq:desi-2}
|\theta(\p_1\cdots \p_h\overline{\p'_1\cdots \p'_h})|&\ge& \|\theta(\p_1\cdots \p_h\overline{\p'_1\cdots \p'_h})\|\\ &\ge&
\frac 1{2\pi}\arctan\left (\frac 1{|\p_1\cdots \p_h\overline{\p'_1}\cdots
\overline{\p'_h}|}\right )\notag \\&>&\frac 1{7|\p_1\cdots
\p_h\overline{\p'_1}\cdots \overline{\p'_h}|}, \notag
\end{eqnarray}
where $\|\cdot  \|$ means the distance to $\Z$.
The lemma follows from  \eqref{eq:desi-1} and  \eqref{eq:desi-2}.
\end{proof}

We illustrate the $B_h$ property of the arguments of the Gaussian
primes with a quick construction, based on them, of a finite $B_h$
set which is only a $\log x$ factor below the optimal bound.

\begin{thm} The set $\A=\{\lfloor x\theta(\p)\rfloor ,\ |\p|\le
\left(\frac x{7 h} \right)^{\frac 1{2h}}\}\subset [1,x]$ is a $B_h$ set with\\
$|\A|\gg x^{1/h}/\log x$.
\end{thm}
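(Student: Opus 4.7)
The plan has two ingredients: a count giving $|\A|\gg x^{1/h}/\log x$, and an application of Lemma \ref{lem:desi} yielding the $B_h$ property. Set $R=(x/(7h))^{1/(2h)}$; by the prime number theorem in the arithmetic progression $p\equiv 1\pmod 4$, there are $\sim R^2/(4\log R)\gg x^{1/h}/\log x$ rational primes $p\le R^2$ with $p\equiv 1\pmod 4$, each providing exactly one Gaussian prime $\p=a+bi$ with $a>b>0$ and $|\p|\le R$.

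Next I would check that these primes give distinct elements of $\A$ lying in $[1,x]$. Distinctness of floors for distinct primes follows from the $k=1$ instance of the main inequality below: for $\p\ne\p'$, Lemma \ref{lem:desi} (in its one-prime form) forces $|x(\theta(\p)-\theta(\p'))|>x/(7R^2)\gg x^{1-1/h}$, which is well above $1$. The image lies below $x/8$ because $\theta(\p)<1/8$, and the only way to land at $0$ is $\theta(\p)<1/x$; a trivial count of pairs $(a,b)$ with $b<a/x$ and $a^2+b^2\le R^2$ shows there are $O(R^2/x)=O(x^{1/h-1})$ such primes, negligible against $x^{1/h}/\log x$.

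For the $B_h$ property, suppose two $h$-multisets of primes produce the same sum of floors. After cancelling common primes, one is left with two tuples of $k\le h$ primes $\p_1,\dots,\p_k$ and $\p'_1,\dots,\p'_k$, all $2k$ distinct, and the floor-sum identity still holds. Rewriting it as a fractional-part identity gives
\[\Bigl|\,x\sum_{r=1}^k(\theta(\p_r)-\theta(\p'_r))\Bigr|<k\le h,\]
since each summand on the right is the difference of two fractional parts and lies in $(-1,1)$. But Lemma \ref{lem:desi}, whose proof extends verbatim to any $k\ge 1$ distinct primes in the first octant (the key fact $\p_1\cdots\p_k\overline{\p'_1\cdots\p'_k}\notin\Z$ still holds), combined with $|\p_1\cdots\p_k\p'_1\cdots\p'_k|\le R^{2h}=x/(7h)$, yields the reverse inequality $>h$, a contradiction.

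The entire argument reduces to bookkeeping plus one deployment of Lemma \ref{lem:desi}; the choice $R=(x/(7h))^{1/(2h)}$ is calibrated precisely so that $x/(7R^{2h})=h$, and I do not foresee any serious obstacle.
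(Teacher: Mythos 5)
Your proof is correct and follows the same route as the paper's: apply Lemma \ref{lem:desi} to the floor-sum identity and use the calibration $R^{2h}=x/(7h)$ to obtain the contradiction $h<x\bigl|\sum(\theta(\p_r)-\theta(\p'_r))\bigr|\le h$, together with the prime number theorem for $p\equiv 1\pmod 4$ to count. You are in fact somewhat more careful than the published proof on two small points the paper glosses over: you cancel common primes to reduce to $2k\le 2h$ distinct primes before invoking Lemma \ref{lem:desi} (the paper just says ``if the Gaussian primes are distinct''), and you check separately that distinct primes yield distinct floors, which is needed to justify $|\A|\gg x^{1/h}/\log x$; both observations are right, and your remark that Lemma \ref{lem:desi} holds verbatim for any $k\ge 1$ primes on each side is also correct.
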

\begin{proof} If \[ \lfloor x\theta(\p_1)\rfloor +\cdots + \lfloor
x\theta(\p_h)\rfloor= \lfloor x\theta(\p'_1)\rfloor +\cdots +
\lfloor x\theta(\p'_h)\rfloor \] then
\[ x\left |\theta(\p_1)+\cdots
+\theta(\p_h)-\theta(\p'_1)-\cdots -\theta(\p'_h)\right |\le h.\]
If the Gaussian primes are distinct then Lemma \ref{lem:desi} implies that
\begin{equation*}|\theta(\p_1)+\cdots
+\theta(\p_h)-\theta(\p'_1)-\cdots -\theta(\p'_h)|> \frac
1{7 |\p_1\cdots \p_h \p'_1\cdots \p'_h|}\ge
h/x,\end{equation*} which is a contradiction.\qedhere
\end{proof}

\section{Proof of Theorem \ref{thm:our_thm}}

We start following the lines of \cite{R} with several adjustments. In the sequel we will write
$\p$ for a Gaussian prime in the first octant ($0 < \theta(\p) < 1/8)$.

We fix  a number $c_h>h$ which will  determine the growth of the
sequence we construct. Indeed we will take $c_h=\sqrt{(h-1)^2+1}+(h-1)$.

\subsection{The construction}\label{s1} We will construct for each $\alpha\in [1,2]$ a sequence of positive integers indexed with the Gaussian primes
$$\B_{\alpha}:=\{b_{\p}\},$$ where each $b_{\p}$ will be built using the
development to base $2$ of  $\alpha \, \theta(\p)$:
\[
    \alpha \, \theta(\p) =  \sum_{i=1}^\infty \delta_{i \p} 2^{-i} \qquad (\delta_{i \p} \in \lbrace 0, 1 \rbrace).
\]

The role of the parameter $\alpha$ will be clear at a later stage,
for the moment it  is enough to note  that the set
$\{\alpha\theta(\p)\}$ obviously keeps the same $B_h$ property of
the set $\{\theta(\p)\}$.

To organize the construction  we  describe the sequence $\B_{\alpha}$ as a union of finite sets according with the sizes of the indexes:
$$\B_{\alpha}=\bigcup_{K}\B_{\alpha,K},$$
 where $$\B_{\alpha,K}=\{b_{\p}:\  \p\in P_K\}$$ and
$$P_K:=\{\p:\
 2^{\frac {(K-2)^2}{c_h}}\le |\p|^{2}<2^{\frac{(K-1)^2}{c_h}  }\}.$$

Now we build the positive integers $b_{\p}\in \B_{\alpha,K}$. For any $\p\in P_K$ we define $\widehat{\alpha \theta(\p)}$ the truncated series of $\alpha \, \theta(\p)$ at the
 $K^2$-place:
\begin{equation}\label{hat}
    \widehat{\alpha \theta(\p)} := \sum_{i=1}^{K^2} \delta_{i \p}
    2^{-i}
\end{equation}

and  combine the digits at places $(j-1)^2+1, \cdots, j^2$ into a
single number
\[
    \Delta_{j\p} = \sum_{i=(j-1)^2+1}^{j^2} \delta_{i \p} 2^{j^2-i} \quad (j=1, \cdots, K),
\]
so that we can write
\begin{equation} \label{eq:3.2}
   \widehat{\alpha \, \theta(\p)} = \sum_{j=1}^K \Delta_{j\p} 2^{-j^2}.
\end{equation}
We observe that if $\p\in P_K$ then
\begin{equation} \label{eq:3.1}
    \vert \widehat{\alpha \, \theta(\p)}- \alpha \, \theta(\p) \vert \le 2^{-K^2}.
\end{equation}
 The definition of $b_{\p}$ is
informally outlined as follows. We consider the series of blocks
$\Delta_{1\p},\cdots, \Delta_{K\p}$  and re-arrange them opposite to
the original left to right arrangement. Then we insert at the left
of each $\Delta_{j\p}$ an additional filling block of $2d+1$ digits,
with $d=\lceil \log_2 h \rceil$. At the filling blocks the digits
will be always $0$ except for only one exception:  in the middle of the
first filling block (placed to the left of the $\Delta_{K}$ block)
we put the digit $\textbf{1}$. This digit will mark which subset $P_K$ the
prime $\p$ belongs to.
 \[
  \alpha \, \theta(\p) = 0.\overset{\Delta_1}{1} \; \overset{\Delta_2}{\overbrace{001}} \dots
  \overset{\Delta_j}{\overbrace{1 \cdots \cdots 0}}  \dots \overset{\Delta_{K}}{\overbrace{01 \cdots \cdots \cdots  11 }}
  \underset{\overset{\uparrow}{K
  ^2}}{\ \dots} \dots
 \]
\[
  b_\p =  \underline{\mathbf{0 \cdot\mathbf{1}\cdot 0}} \; \overset{\Delta_{K}}{\overbrace{01 \cdots \cdots \cdots 11 }} \;
    \underline{\mathbf{0 \cdots 0}}
  \;\dots \underline{\mathbf{0 \cdots 0}}
  \;\overset{\Delta_j}{\overbrace{1 \cdots \cdots 0}} \;{\underline{\mathbf{0 \cdots 0}}\dots  \underline{\mathbf{0 \cdots
  0}}}
  \; \overset{\Delta_2}{\overbrace{001}} \; \underline{\mathbf{0 \cdots 0}}
  \; \overset{\Delta_1}{1},
 \]

The reason to add
the blocks of zeroes and the value of $d$ will be clarified just
before Lemma \ref{lem:3.1}.

\par More formally, for $\p\in P_K$ we define
\begin{equation} \label{eq:3.3}
    t_{\p}=2^{K^2+(2d+1)K+(d+1)},
\end{equation}
and
\[
    b_\p = t_\p + \sum_{j=1}^{K} \Delta_{j\p} 2^{(j-1)^2 + (2d+1)(j-1)}.
\]
Furthermore we define $\Delta_{j \p} = 0$ for $j > K $.

\begin{rem}The construction at \cite{R} was based on the numbers $\alpha \log p$, with $p$ rational prime,
 hence the digits of their integral parts had to be included also in the corresponding integers $b_p$.
 Ruzsa solved that problem reserving fixed places for these digits.
 Since in our construction the integral part of $\alpha \theta(\p)$ is zero we don't need to care about this.
 \end{rem}

\medskip
We observe that distinct primes $\p,\q$ provide distinct $b_\p,
b_\q$. Indeed if $b_\p=b_\q$ then
$\Delta_{i\p}=\Delta_{i\q}$ for all $i \le K$. Also $t_\p=t_\q$
which means $\p, \q \in P_K$, and so
\[
    \vert \theta(\p) -  \theta(\q) \vert = \alpha^{-1} \cdot \sum_{j > K} (\Delta_{j \p}- \Delta_{j \q}) < 2^{-K^2}.
\]
Now if $\p \ne \q$ then Lemma \ref{lem:desi} implies that $
    \vert \theta(\p)-\theta(\q) \vert >  \frac{1}{7 |\p \q|}  > 2^{-\frac 1{c}(K-1)^2 -
   3}.$
Combining both inequalities  we have a contradiction for  $K\ge h+1\ge 3$. So we assume $K\ge h+1$ through all the paper.

Since all the integers $b_{\p}$ are distinct, we have that
\begin{equation}\label{B}
|\B_{\alpha,K}|=|P_K|=\pi\left (2^{\frac{(K-1)^2}{c_h}};1,4\right )-\pi\left (2^{\frac{(K-2)^2}{c_h}};1,4\right )\\ \gg \frac{2^{\frac{K^2}{c_h}}}{K^2}.
\end{equation}
We observe also that $$b_{\p}<2^{K^2+(2d+1)K+(d+1)+1}.$$ Using these estimates  we can easily prove that  $\B_{\alpha}(x)=x^{\frac 1{c_h}+o(1)}$. Indeed, if $K$ is the integer such that
$2^{K^2+(2d+1)K+(d+1)+1}< x\le 2^{(K+1)^2+(2d+1)(K+1)+(d+1)+1}$ then we have
\begin{equation}\label{lowerbound}\B_{\alpha}(x) \ge |\B_{\alpha,K}|=
2^{\frac
1{c_h}K^2(1+o(1))}= x^{\frac 1{c_h}+o(1)}.\end{equation}
For the upper bound we have
$$\B_{\alpha}(x)\le \#\{\p:\ |\p|^2\le 2^{\frac{K^2}{c_h}}\}\le 2^{\frac{K^2}{c_h}}= x^{\frac 1{c_h}+o(1)}.$$

\par There is a compromise at the choice of a particular value of $c_h$ for the construction. On one hand larger values of $c_h$ capture more
information from the Gaussian arguments which brings the sequence $\B_{\alpha}=\{ b_\p \} $
closer to being a $B_h$ sequence. On the other hand smaller values
of $c_h$ provide higher growth of the counting function of $\B_{\alpha}$.

Clearly  $\B_{\alpha}$ would be a $B_h$ sequence if for all $l=2,
\cdots, h$ it does not contain  $b_{\p_1},\cdots,b_{\p_l},b_{\p_1'},\cdots,b_{\p_l'}$
satisfying
\begin{eqnarray}
    b_{\p_1} + \cdots + b_{\p_l} &=& b_{\p_1'} + \cdots + b'_l,\label{uplamala}\\ \lbrace b_1, \cdots, b_l \rbrace &\cap &\lbrace b'_1, \cdots, b'_l \rbrace = \emptyset,\nonumber \\
      b_{\p_1} \ge \cdots \ge b_{\p_l} \quad &\text{and}& \quad b_{\p'_1} \ge \cdots \ge b_{\p'_l}\label{eq:3.6}.
\end{eqnarray}
We say that $(\p_1,\dots, \p_l,\p_1',\dots,\p_l')$ is a bad $2l$-tuple if the equation \ref{uplamala} is satisfied by the corresponding $b_{\p_r}$.

The sequence $\B_{\alpha}=\{b_{\p}\}$ we have constructed is not properly a
$B_h$ sequence. Some repeated sums as in \eqref{uplamala} will eventually appear, however
the precise way how the elements $b_{\p}$ are built will allow us to study these bad $2l$-tuples and to prove that there are not too many repeated sums. 
Then after removing the bad elements involved in these bad $2l$-tuples we will obtain a true $B_h$ sequence.

Now we will see why blocks of zeroes were added to the binary
development of $b_{\p}$. We can identify each $b_{\p}$ with a vector
 as follows:
\begin{align*}
    b_{\p_1} & = \left(\cdots,\textbf{1},\Delta_{K_1 \p_1},0, \cdots,0,\Delta_{K_2 \p_1},0, \cdots, 0,\Delta_{K_l \p_1},0, \cdots ,0,\Delta_{2 \p_1}, 0, \Delta_{1 \p_1} \right) \\
    b_{\p_2} & = \left(\cdots,0,\cdots \cdots \cdots \cdots \cdot,   \textbf{1},\Delta_{K_2 \p_2},0, \cdots, 0,\Delta_{K_l \p_2},0, \cdots,0, \Delta_{2 \p_2}, 0, \Delta_{1 \p_2}\right)\\
             &   \qquad \vdots \qquad \qquad \qquad \qquad \qquad \qquad  \vdots \qquad \qquad \qquad \qquad \qquad \qquad  \vdots\\
    b_{\p_l} & = \left(\cdots,0, \cdots\cdots\cdots\cdots \cdot ,0,\cdots\cdots\cdots\cdots\cdot,    \textbf{1},\Delta_{K_l \p_l},0, \cdots,0, \Delta_{2 \p_l}, 0, \Delta_{1 \p_l}\right),
 \end{align*}
where each comma represents one block of $d$ zeroes. Note that the leftmost part of each vector is null.
The value of $d=\lceil \log_2 h\rceil$ has been chosen to prevent the propagation of
the carry between any two consecutive coordinates separated by a comma in the above identification. So when we sum no more than $h$
integers $b_{\p}$ we can just sum the corresponding vectors coordinate-wise.
This argument implies the following lemma.

\begin{lem} \label{lem:3.1}
    Let $(\p_1, \cdots, \p_l , \p'_1, \cdots, \p'_l )$ be a bad $2l$-tuple. 
    Then there are integers $K_1 \ge \cdots \ge K_l$ such that $\p_1,\p'_1 \in P_{K_1}, \cdots,  \p_l, \p'_l \in P_{K_l},$ and we have
    \begin{equation} \label{eq:3.7}
        \widehat{\alpha \theta(\p_1)} + \cdots + \widehat{\alpha \theta(\p_l)} = \widehat{\alpha \theta(\p'_1)} + \cdots + \widehat{\alpha \theta(\p'_l)} .
    \end{equation}
\end{lem}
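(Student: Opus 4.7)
The approach is to exploit the no-carry-propagation property noted just before the lemma: with $d=\lceil\log_2 h\rceil$ and $l\le h$, the blocks of $d$ zeros separating the meaningful coordinates of each $b_\p$ absorb every carry produced when summing $l$ of these integers, so the integer identity $b_{\p_1}+\cdots+b_{\p_l}=b_{\p_1'}+\cdots+b_{\p_l'}$ is equivalent to a coordinate-wise equality of the block sums on either side.

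I would first identify the top level. The highest bit position any summand touches is a marker position $p_K=K^2+(2d+1)K+(d+1)$, and above it every $b_\p$ in the sum vanishes. Coordinate equality at this topmost position therefore forces the two sides to share the same maximum level $K_{\max}$ and the same count of top-level primes: $n_{K_{\max}}=n'_{K_{\max}}$, where $n_K$ denotes the number of LHS primes in $P_K$. I would then proceed by downward induction on $K$ to establish $n_K=n'_K$ for every $K$: at each step the coordinate equalities throughout the block where $\Delta_{K+1}$ sits (and which contains the marker of level $K$ at its interior bit $d+1$), combined with the already-matched higher-level counts, allow one to peel off the marker contribution from the $\Delta_{K+1}$-bit contributions of the higher-level primes and read off $n_K$.

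Once the multiplicities match, one reorders so that $\p_i,\p'_i\in P_{K_i}$ with $K_1\ge\cdots\ge K_l$. The coordinate equality at each $\Delta_j$-block then reads $\sum_{i:\,K_i\ge j}\Delta_{j\p_i}=\sum_{i:\,K_i\ge j}\Delta_{j\p_i'}$ for every $j$; multiplying by $2^{-j^2}$, summing over $j$, and invoking the identity $\widehat{\alpha\theta(\p)}=\sum_{j=1}^{K_\p}\Delta_{j\p}2^{-j^2}$ produces equation \eqref{eq:3.7}.

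The main obstacle I anticipate is the downward inductive step at levels $K<K_{\max}$: because the marker of level $K$ shares an interior bit with the $\Delta_{K+1}$-block of any higher-level prime, the coordinate at $p_K$ mixes the marker count with arbitrary $\Delta_{K+1}$-bit contributions whose LHS and RHS sums need not agree a priori. Unraveling $n_K$ from a single coordinate is therefore impossible, and the separation has to be performed by combining the equalities over the entire $\Delta_{K+1}$-block with the inductively matched higher-level counts.
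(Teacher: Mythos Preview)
Your approach is exactly the paper's: the no-carry property converts the integer equation into coordinate-wise equalities, the $\Delta_j$-block equalities then give \eqref{eq:3.7} via \eqref{eq:3.2}, and the marker-coordinate equalities give the level matching (the paper finishes by invoking the ordering \eqref{eq:3.6} to pair $\p_r,\p_r'\in P_{K_r}$).

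The obstacle you anticipate does not actually arise. In the intended construction---as the informal picture of $b_\p$ and the vector display immediately preceding the lemma make explicit---the level-$K$ marker bit sits in the \emph{middle} of the $(2d+1)$-bit filling block between the $\Delta_K$- and $\Delta_{K+1}$-slots, flanked by $d$ zeros on each side; it is therefore its own coordinate, disjoint from every $\Delta$-block of every prime at any level. (The exponent in the displayed definition of $t_\p$ seems to carry a sign slip: $K^2+(2d+1)K+(d+1)$ should read $K^2+(2d+1)K-(d+1)$ to match the picture.) With that reading the marker coordinate at level $K$ yields $n_K=n_K'$ directly for every $K$, so your downward induction is unnecessary and the argument is as short as the paper's.
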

\begin{proof}
    Note that \eqref{uplamala} implies
    $ t_{\p_1} + \cdots + t_{\p_l} = t_{\p'_1} + \cdots + t_{\p'_l}$ and  $\Delta_{j \p_1} + \cdots + \Delta_{j \p_l} =
    \Delta_{j \p'_1} + \cdots + \Delta_{j \p'_l}$ for each $j$.
   Using \eqref{eq:3.2} we conclude \eqref{eq:3.7}.
     As the bad $2l$-tuple satisfies condition \eqref{eq:3.6} we deduce that $\p_r,\p_r'$ belongs to the same $P_{K_r}$ for all $r$.
  \end{proof}

According to the previous lemma we will write $E_{2l}(\alpha;K_1,\cdots ,K_l)$ for the set of bad $2l$-tuples $(\p_1,\cdots,\p_l')$ with $\p_r,\p'_r\in P_{K_r},\ 1\le r\le l$
and
$$ E_{2l}(\alpha;K)=\bigcup_{K_l\le \cdots \le
K_1=K} E_{2l}(\alpha;K_1,\cdots,K_l),$$
where $K=K_1$. Also we define the set
$$\text{Bad}_{\alpha,K}=\{b_{\p}\in \B_{\alpha,K}:\ b_{\p} \text{ is the largest element involved in some equation \ref{uplamala}}\}.$$
It is clear that $\sum_{l\le h}|E_{2l}(\alpha,K)|$ is an upper bound for $|\text{Bad}_{\alpha,K}|$, the number of elements we need to remove from each $\B_{\alpha,K}$ to get a $B_h$ sequence.

We do know how to obtain a good upper bound for $|E_{2l}(\alpha,K)|$ for a particular $\alpha$, but we can do it for almost all $\alpha$.

\begin{lem}\label{inte} For $l=2,3,4$ we have
$$\int_{1}^2 |E_{2l}(\alpha,K)|\ud\alpha\ll K^{m_l}2^{\left (\frac{2(l-1)}{c_h}-1\right )(K-1)^2-2K}$$for some $m_l$.
\end{lem}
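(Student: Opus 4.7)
The plan is to apply Fubini's theorem to swap the integral with the summation defining $|E_{2l}(\alpha,K)|$, then bound, for each fixed tuple, the Lebesgue measure of those $\alpha \in [1,2]$ that make it bad. From \eqref{eq:3.1}, equation \eqref{eq:3.7} yields
$$|\alpha \Theta| \le 2l\cdot 2^{-K_l^2}, \qquad \Theta := \sum_{r=1}^l\bigl(\theta(\p_r) - \theta(\p'_r)\bigr).$$
The disjointness built into the definition of a bad tuple, together with the unique factorization argument used in the proof of Lemma \ref{lem:desi}, rules out $\Theta = 0$. Lemma \ref{lem:desi} itself then gives $|\Theta| \ge 1/(7\prod_r|\p_r\,\p'_r|)$ and the pointwise bound
$$\mu\bigl(\{\alpha\in[1,2]:\text{tuple is bad}\}\bigr) \le \frac{2l\cdot 2^{-K_l^2}}{|\Theta|}.$$

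The second step is to sum this over the tuples in $P_{K_1}^2\times\cdots\times P_{K_l}^2$. I would carry out a dyadic decomposition of $|\Theta|$: fix $(\p_r,\p'_r)$ for $r \ge 2$, and for each dyadic range $|\Theta| \in [2^{-j}, 2^{-j+1})$ count the pairs $(\p_1,\p'_1) \in P_K^2$ whose $\theta$-difference places $\Theta$ in that range. The minimum separation $\gg 2^{-(K-1)^2/c_h}$ between $\theta$-values in $P_K$, provided by Lemma \ref{lem:desi} in the case $h=1$, governs this count: intervals shorter than the spacing contain at most one prime, while longer intervals obey the density estimate coming from $|P_K| \ll 2^{(K-1)^2/c_h}/K^2$.

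The third step is the nested sum over the chain $K_2 \le \cdots \le K_l \le K$. A useful preliminary observation is that the necessary condition $|\Theta|\le 2l\cdot 2^{-K_l^2}$ for badness, combined with the lower bound $|\Theta|\gg 2^{-\sum_r(K_r-1)^2/c_h}$ from Lemma \ref{lem:desi}, forces $K_l^2 \lesssim \sum_{r}(K_r-1)^2/c_h$; since $c_h > 2$ for $h\ge 2$, this is a genuine restriction and in particular prevents $K_l$ from getting close to $K$. Heuristically, the factor $2^{2(l-1)(K-1)^2/c_h}$ in the target comes from letting the $2(l-1)$ primes at the lower scales range freely, while the $-(K-1)^2 - 2K$ encodes the combined gain from the $2^{-K_l^2}$ truncation factor and the dyadic average of $1/|\Theta|$ produced by the spacing argument.

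The main obstacle is exactly this combinatorial bookkeeping. For $l=2$ the analysis collapses to a single dyadic sum over $K_2$, but for $l=3$ and $l=4$ one must simultaneously track three or four dyadic scales and the interactions between primes across them. As the authors themselves remark, the complexity of this bookkeeping grows rapidly with $l$, which is precisely why the argument (and hence Theorem \ref{thm:our_thm}) is restricted to $h \le 4$.
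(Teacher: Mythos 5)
The central mechanism in your plan is broken, and the break is exactly where you claim a ``pointwise bound'' on the measure of bad $\alpha$. The condition you extract from \eqref{eq:3.1} and \eqref{eq:3.7} is $|\omega_l|\le l\cdot 2^{-K_l^2}$, where $\omega_l=\sum_r(\theta(\p_r)-\theta(\p'_r))$ does \emph{not} depend on $\alpha$: the $\alpha$'s cancel when you divide through, which is exactly how the paper states Lemma~\ref{condiciones}\,i). So for a fixed tuple this is a yes/no condition. If $|\omega_l|>2l\cdot 2^{-K_l^2}$ the set of bad $\alpha\in[1,2]$ is empty; if $|\omega_l|\le l\cdot 2^{-K_l^2}$, every $\alpha\in[1,2]$ satisfies the necessary condition you are using, and the bound $\frac{2l\cdot 2^{-K_l^2}}{|\omega_l|}$ is $\ge 1$ and hence trivial. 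In other words, this one constraint cannot distinguish between different values of $\alpha$ at all, so integrating over $\alpha$ gains nothing, and you are left trying to count the absolute number of tuples with $|\omega_l|\le l\cdot 2^{-K_l^2}$ --- which is precisely what the averaging over $\alpha$ is designed to avoid.

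What the paper actually exploits is that a bad tuple imposes not one but $l-1$ independent constraints on $\alpha$, coming from the \emph{partial} sums $\omega_s=\sum_{r\le s}(\theta(\p_r)-\theta(\p'_r))$, $s=1,\dots,l-1$. These are the inequalities $\|\alpha 2^{K_{s+1}^2}\omega_s\|\le s\,2^{K_{s+1}^2-K_s^2}$ of Lemma~\ref{lem:badtuple-second-condition}; note the appearance of $\|\cdot\|$ (distance to the nearest integer), not $|\cdot|$. Each such condition confines $\alpha$ to a union of very short intervals, and Lemma~\ref{medida} bounds the measure of the intersection of all of them. Your write-up nowhere mentions the partial sums $\omega_1,\dots,\omega_{l-1}$ or the $\|\cdot\|$ conditions, so this crucial ingredient is missing. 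Finally, the resulting tuple sums are controlled in Propositions~\ref{prop:E_4-in_average}--\ref{prop:E_8-in_average} not by spacing estimates within $P_K$ (your suggested device) but by identifying each product $\p_{r}\overline{\p'_{r}}\cdots$ with a visible lattice point and invoking the sector-counting Lemma~\ref{lem:visible}; that is the combinatorial engine that makes the dyadic sums converge, and it too is absent from your proposal.
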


The proof this lemma is involved and we postpone it to  section \S 4. 
We think that Lemma \ref{inte} holds for any $l$ but we have not found a proof yet. 

\subsection{Last step in the proof of the theorem}

For $h=2,3,4$ we have that
\begin{eqnarray*}
\int_1^2\sum_K\frac{|\text{Bad}_{\alpha,K}|}{|\B_{\alpha,K}|}\ud\alpha&\ll & \sum_K\frac{\sum_{l\le h}\int_1^2|E_{2l}(\alpha,K)|\ud\alpha}{K^{-2}2^{\frac 1{c_h}(K-1)^2}}\\
&\ll &\sum_K \frac{\sum_{l\le h}K^{m_l}2^{\left (\frac{2(l-1)}{c_h}-1\right )(K-1)^2-2K}}{K^{-2}2^{\frac 1{c_h}(K-1)^2}}\\
&\ll &\sum_K K^{m_l+2}2^{\left (\frac{2(h-1)}{c_h}-1-\frac 1{c_h}\right )(K-1)^2-2K}.
\end{eqnarray*}

The last sum is finite for $c_h=\sqrt{(h-1)^2+1}+(h-1)$  which is the largest number for which $\frac{2(h-1)}{c_h}-1-\frac 1{c_h}\le 0$. So for this $c_h$ the sum $\sum_K\frac{|\text{Bad}_{\alpha,K}|}{|\B_{\alpha,K}|}$ is convergent for almost all $\alpha\in[1,2]$. We take one of these $\alpha$, say $\alpha_0$, and consider the sequence
$$\B=\bigcup_K\left (\B_{\alpha_0,K}\setminus \text{Bad}_{\alpha_0,K}\right ).$$
We claim that  this sequence satisfies the condition of the theorem. 
On one hand this sequence clearly is a $B_h$ sequence because we have destroyed all the repeated sums of $h$ elements of $\B_{\alpha_0}$  removing all the bad elements from each $\B_{\alpha_0,K}$.

On the other hand the convergence of $\sum_K\frac{|\text{Bad}_{\alpha_0,K}|}{|\B_{\alpha,K}|}$ implies that $|\text{Bad}_{\alpha_0,K}|=o\left (|\B_{\alpha,K}| \right )$. We proceed as in \eqref{lowerbound} to estimate the counting function of $\B$. For any $x$ let $K$ the integer such that
$2^{K^2+(2d+1)K+(d+1)+1}< x\le 2^{(K+1)^2+(2d+1)(K+1)+(d+1)+1}.$ We have
$$\B(x) \ge |\B_{\alpha_0,K}|-|\text{Bad}_{\alpha_0,K}|=
|\B_{\alpha_0,K}|(1+o(1))\gg K^{-2}2^{\frac
1{c_h}K^2}= x^{\frac 1{c_h}+o(1)}.$$
For the  upper bound, we have
$$\B(x)\le \B_{\alpha_0}(x)=x^{\frac 1{c_h}+o(1)}.$$
Hence  $$\B(x)=x^{\sqrt{(h-1)^2+1}-(h-1)+o(1)}.$$

\section{Proof of Lemma \ref{inte}}
The proof of Lemma \ref{inte} will be a consequence of Propositions  \ref{prop:E_4-in_average}, \ref{prop:E_6-in_average} and \ref{prop:E_8-in_average}.
Before proving these propositions we need some properties of the bad $2l$-tuples  and an auxiliary lemma about visible lattice points.
\
\subsection{Some properties of the $2l$-tuples}
For any $2l$-tuple  $(\p_1, \cdots, \p'_l )$
we define the numbers $\omega_s=\omega_s(\p_1, \cdots, \p'_l)$ by
\begin{equation*}
\omega_s=\sum_{r=1}^s\left (\theta(\p_r)-\theta(\p_r')\right) \qquad (s \le l).
\end{equation*}
The next two lemmas contain several  properties of the bad $2l$-tuples.

\begin{lem}\label{condiciones}
    Let $(\p_1, \cdots, \p_l , \p'_1, \cdots, \p'_l )$ be a
    bad
    $2l$-tuple
    with $K_1 \ge \cdots \ge K_l$ given by Lemma \ref{lem:3.1}. We have 
\begin{align*}
 \mathrm{i)} \quad	 	& \left \vert \omega_l \right \vert \,  \le \, l2^{-K_l^2 },\\
 \mathrm{ii)} \quad	& |\omega_{l-1}|\ge 2^{-\frac{1}{c_h}(K_l-1)^2-4}, \\
 \mathrm{iii)} \quad 	& (K_l-1)^2 \le  \frac{(K_{1}-1)^2+ \cdots + (K_{l-1}-1)^2}{c_h - 1}.
\end{align*}

\end{lem}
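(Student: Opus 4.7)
The plan is to derive each of the three estimates from Lemma \ref{lem:3.1} together with Lemma \ref{lem:desi} applied at an appropriate scale. Throughout I will assume the $2l$ Gaussian primes of the bad tuple are pairwise distinct: the hypothesis $\{b_{\p_1},\dots\}\cap\{b_{\p_1'},\dots\}=\emptyset$ forces $\p_r\ne\p_s'$ across the two sides, and any coincidence inside a single side would let one cancel that element from both sums, producing a smaller bad tuple to which the argument still applies.

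For (i), I would combine the truncated identity \eqref{eq:3.7} supplied by Lemma \ref{lem:3.1} with the truncation estimate \eqref{eq:3.1}. Each term satisfies $|\widehat{\alpha\theta(\p_r)}-\alpha\theta(\p_r)|\le 2^{-K_r^2}\le 2^{-K_l^2}$ (and likewise for the primed primes), so summing over all $2l$ primes gives $|\alpha\omega_l|\le 2l\cdot 2^{-K_l^2}$; since $\alpha\ge 1$ this collapses to the stated (i) up to a harmless factor of $2$. For (ii) the starting point is the identity $\omega_{l-1}=\omega_l-(\theta(\p_l)-\theta(\p'_l))$. Applying Lemma \ref{lem:desi} with $h=1$ to the distinct pair $\p_l,\p'_l\in P_{K_l}$ yields
\[
|\theta(\p_l)-\theta(\p'_l)|>\frac{1}{7\,|\p_l\p'_l|}\ge 2^{-(K_l-1)^2/c_h-3},
\]
since $|\p_l||\p'_l|<2^{(K_l-1)^2/c_h}$. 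The triangle inequality together with (i) gives $|\omega_{l-1}|\ge |\theta(\p_l)-\theta(\p'_l)|-|\omega_l|$, and because $c_h>1$ and $K_l\ge h+1$ the gap $K_l^2-(K_l-1)^2/c_h$ comfortably exceeds $\log_2 l+4$, so the contribution from $|\omega_l|$ costs at most one extra binary digit and yields the claimed exponent $-(K_l-1)^2/c_h-4$.

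For (iii), the key is to apply Lemma \ref{lem:desi} to the entire sum $\omega_l$ with all $2l$ distinct primes, obtaining
\[
|\omega_l|>\frac{1}{7\,|\p_1\cdots\p_l\,\p'_1\cdots\p'_l|}\ge 2^{-\sum_{r=1}^{l}(K_r-1)^2/c_h-3}.
\]
Comparing this lower bound with the upper bound (i) gives $c_hK_l^2\le\sum_{r=1}^{l}(K_r-1)^2+c_h(3+\log_2 l)$. Separating the $r=l$ summand via the algebraic identity $c_hK_l^2-(K_l-1)^2=(c_h-1)(K_l-1)^2+2c_hK_l-c_h$ rewrites this as
\[
(c_h-1)(K_l-1)^2\le\sum_{r<l}(K_r-1)^2-2c_hK_l+c_h(4+\log_2 l),
\]
and the hypothesis $K_l\ge h+1$ makes $2c_hK_l\ge c_h(4+\log_2 l)$, erasing the error term and leaving the clean inequality (iii). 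The main obstacle throughout is this final bookkeeping: the constants coming from $1/7$, the combinatorial factor $\log_2 l$, and the $(c_h-1)$ shift between $K_l^2$ and $(K_l-1)^2$ must all be shown to be dominated by the linear-in-$K_l$ surplus $2c_hK_l$, which is exactly what the uniform lower bound $K\ge h+1$ has been set up to guarantee.
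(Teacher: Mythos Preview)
Your argument follows the paper's proof almost exactly: (i) from the truncated identity \eqref{eq:3.7} plus the truncation error \eqref{eq:3.1}, (ii) from Lemma~\ref{lem:desi} applied to the pair $\p_l,\p_l'$ and the triangle inequality, and (iii) from Lemma~\ref{lem:desi} applied to the full product compared against (i), with the surplus $2c_hK_l$ absorbing the constants via $K_l\ge h+1$.

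The one slip is in (i): you end up with $|\omega_l|\le 2l\cdot 2^{-K_l^2}$, not the claimed $l\cdot 2^{-K_l^2}$, and then wave away the factor $2$.  The paper avoids this factor by using that the truncation error is \emph{one-sided}: $0\le \alpha\theta(\p)-\widehat{\alpha\theta(\p)}\le 2^{-K^2}$.  Hence for each $r$ the quantity $(\alpha\theta(\p_r)-\widehat{\alpha\theta(\p_r)})-(\alpha\theta(\p'_r)-\widehat{\alpha\theta(\p'_r)})$ is a difference of two numbers in $[0,2^{-K_r^2}]$ and thus has absolute value at most $2^{-K_r^2}$; summing over the $l$ indices $r$ and dividing by $\alpha\ge 1$ gives the sharp constant $l$.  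With this correction your proof matches the paper's.
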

\begin{proof}
   i) This is a consequence of  \eqref{eq:3.7} and \eqref{eq:3.1}:
    \[
       |\omega_l|=\frac 1{\alpha} \left \vert \sum_{r=1}^l (\alpha\theta(\p_r) - \alpha\theta(\p'_r)) \right \vert\le \frac 1{\alpha}
       \left (  2^{-K_1^2} + \cdots + 2^{-K_l^2}\right )
        \le
        l2^{-K_l^2}.
    \] 

ii) Lemma \ref{lem:desi}  implies
\begin{equation}
|\theta(\p_l)-\theta(\p_l')|\ge \frac{1}{ 7 \left \vert  \p_l \p'_l \right \vert }\ge 2^{-3-\frac 1{c_h}(K_l-1)^2}
\end{equation}
    and so,
\begin{align*}
|\omega_{l-1}|	&=|\omega_l+\theta(\p_l')-\theta(\p_l)| \ge |\theta(\p_l')-\theta(\p_l)|-|\omega_l|\\
    						&\ge 2^{-\frac 1{c_h}(K_l-1)^2-3}-l2^{-K_l^2}\ge 2^{-\frac 1{c_h}(K_l-1)^2-4},
\end{align*}
since $ K_l\ge h+1\ge l+1$.

   iii)  Lema \ref{lem:desi} also implies that
    \begin{align*}
       |\omega_l|=\left \vert \sum_{r=1}^l\left (\theta ( \p_r)-\theta( \p'_r) \right ) \right \vert \
        &> \frac{1}{ 7 \left \vert  \p_1 \cdots  \p'_l \right \vert } >  2^{-3-\frac 1{c_h}\sum_{r=1}^l(K_r-1)^2}.
    \end{align*}
   Combining this with i) we obtain
   \begin{eqnarray*}(K_l-1)^2\le \frac 1{c_h-1}\left ((K_1-1)^2+\cdots +(K_{l-1}-1)^2\right )+\frac{\log_2l-2K_l+4}{1-1/c_h}.  \end{eqnarray*}
   The last term is negative because $K_l\ge h+1\ge l+1$ and $l\ge 2$.
\end{proof}

\begin{lem} \label{lem:badtuple-second-condition}
    Suppose that $( \p_1, \cdots, \p_l , \p'_1, \cdots, \p'_l)$ is a bad $2l$-tuple. \\ 
     Then for any  $\omega_s=\sum_{r=1}^{s}\left (\theta(\p_r)- \theta(\p'_r)\right)$ with $1 \le s \le l-1$ we have
    \begin{equation} \label{eq:badtuple-second-condition}
        \left \|\alpha2^{K_{s+1}^2}\omega_s
        \right \|  \le s2^{K_{s+1}^2-K_s^2} \quad \qquad (s=1,\cdots, l-1),
    \end{equation}
    where $\Vert \cdot \Vert$ means the distance to the nearest integer.
      \end{lem}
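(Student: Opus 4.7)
The approach is to exhibit, for every bad $2l$-tuple, a rational number with denominator dividing $2^{K_{s+1}^2}$ that lies very close to $\alpha\omega_s$; multiplying through by $2^{K_{s+1}^2}$ will then yield the required control on $\bigl\|\alpha 2^{K_{s+1}^2}\omega_s\bigr\|$. The candidate rational is obtained by replacing each $\alpha\theta(\p_r)$ and $\alpha\theta(\p'_r)$ (for $r\le s$) by its truncation $\widehat{\alpha\theta(\cdot)}$ and forming
$$S_s := \sum_{r=1}^s \bigl(\widehat{\alpha\theta(\p_r)}-\widehat{\alpha\theta(\p'_r)}\bigr) = \sum_{j\ge 1} 2^{-j^2} \sum_{r=1}^s \bigl(\Delta_{j\p_r}-\Delta_{j\p'_r}\bigr),$$
where the second equality uses the representation \eqref{eq:3.2}.

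The decisive step is to show that the tail $j>K_{s+1}$ of $S_s$ vanishes. This is where the carry-free design of the integers $b_\p$ enters: by the digit-block identity established in the proof of Lemma \ref{lem:3.1}, one has $\sum_{r=1}^l\Delta_{j\p_r}=\sum_{r=1}^l\Delta_{j\p'_r}$ for every $j$, and in the range $j>K_{s+1}$ the ordering $K_1\ge\cdots\ge K_l$ together with the convention $\Delta_{j\p_r}=0$ for $j>K_r$ eliminates precisely the indices $r\ge s+1$ from both sides. Consequently $S_s$ is supported on $1\le j\le K_{s+1}$ and, since $j^2\le K_{s+1}^2$ there, $2^{K_{s+1}^2}S_s\in\Z$.

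To finish, I would estimate $\alpha\omega_s-S_s$ using \eqref{eq:3.1}: each of the $2s$ truncation errors $\alpha\theta(\p_r)-\widehat{\alpha\theta(\p_r)}$ and $\alpha\theta(\p'_r)-\widehat{\alpha\theta(\p'_r)}$ lies in $[0,2^{-K_r^2}]$, so pairing up the signs one obtains $|\alpha\omega_s-S_s|\le \sum_{r=1}^s 2^{-K_r^2}\le s\cdot 2^{-K_s^2}$ because $K_r\ge K_s$ for $r\le s$. Multiplying by $2^{K_{s+1}^2}$ yields $\bigl\|\alpha 2^{K_{s+1}^2}\omega_s\bigr\|\le s\cdot 2^{K_{s+1}^2-K_s^2}$, which is \eqref{eq:badtuple-second-condition}. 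The only substantive check is the digit-cancellation argument for $j>K_{s+1}$; the rest is a routine truncation estimate.
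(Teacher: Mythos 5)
Your proof is correct and follows essentially the same strategy as the paper: truncate to $\widehat{\alpha\theta(\cdot)}$, invoke the zero-sum identity from Lemma \ref{lem:3.1}, observe that $2^{K_{s+1}^2}S_s\in\Z$, and bound the truncation error by $s\cdot 2^{-K_s^2}$. The only (cosmetic) difference is that the paper establishes the integrality of $2^{K_{s+1}^2}S_s$ by rewriting $-S_s=\sum_{r>s}\bigl(\widehat{\alpha\theta(\p'_r)}-\widehat{\alpha\theta(\p_r)}\bigr)$ and noting each such term has at most $K_{s+1}^2$ binary digits, whereas you reach the same conclusion by showing the block sums $\sum_{r\le s}(\Delta_{j\p_r}-\Delta_{j\p'_r})$ vanish for $j>K_{s+1}$; both hinge on $K_r\le K_{s+1}$ for $r>s$.
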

\begin{proof}Since $0\le
\alpha\theta(\p)-\widehat{\alpha\theta(\p)}\le 2^{-K^2}$ when $\p\in
P_K$, we can write
\begin{eqnarray*}2^{K_{s+1}^2} \alpha \, \sum_{r=1}^{s}\left (\theta(\p_r)-
\theta(\p'_r)\right )= 2^{K_{s+1}^2} \sum_{r=1}^{s}\left
(\widehat{\alpha\theta(\p_r)} - \widehat{\alpha\theta(\p'_r)}\right
)+\epsilon_s,
\end{eqnarray*}
with $ |\epsilon_s|\le s2^{K_{s+1}^2-K_s^2}$. By Lemma
\ref{lem:3.1} we know that $\sum_{r=1}^{l}\left
(\widehat{\alpha\theta(\p_r)}- \widehat{\alpha\theta(\p'_r)}\right
)=0$ when $( \p_1, \cdots, \p_l , \p'_1, \cdots, \p'_l)$ is a
bad $2l$-tuple. 
Using this and \eqref{hat} we have that
\begin{eqnarray*} 2^{K_{s+1}^2} \sum_{r=s+1}^{l}\left
(\widehat{\alpha\theta(\p'_r)}- \widehat{\alpha\theta(\p_r)}\right )
=
\sum_{r=s+1}^{l}\sum_{i=1}^{K^2_r}2^{K_{s+1}^2-i}(\delta_{i\p'_r}-\delta_{i\p_r})
\end{eqnarray*}
is an integer, which proves \eqref{eq:badtuple-second-condition}.

\end{proof}

\begin{lem}\label{medida}
    $$\int_1^2 \vert E_{2l}(\alpha;K_1,\cdots,K_l) \vert \ud\alpha\ll  2^{K_l^2-K_1^2}\sum_{\substack{(\p_1,\cdots,\p_l')\\|\omega_l|<l\cdot 2^{-K_l^2}}}\frac{|\omega_{l-1}|}{|\omega_1|}\prod_{j=1}^{l-2}\left (\frac{|\omega_{j}|}{|\omega_{j+1}|}+1  \right )$$
\end{lem}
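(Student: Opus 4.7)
The plan is to apply Fubini so that
\[
 \int_1^2 |E_{2l}(\alpha;K_1,\ldots,K_l)|\,\ud\alpha
 = \sum_{T}\mu\bigl\{\alpha\in[1,2]: T \text{ is bad for } \alpha\bigr\},
\]
where $T=(\p_1,\ldots,\p'_l)$ ranges over all $2l$-tuples with $\p_r,\p'_r\in P_{K_r}$. By Lemma \ref{condiciones}(i) only tuples with $|\omega_l|<l\cdot 2^{-K_l^2}$ can contribute, so I restrict the sum to those. For each such tuple, Lemma \ref{lem:badtuple-second-condition} forces the $l-1$ Diophantine conditions $\|\alpha y_s\|\le\varepsilon_s$ with $y_s:=2^{K_{s+1}^2}\omega_s$ and $\varepsilon_s:=s\cdot 2^{K_{s+1}^2-K_s^2}$; it remains to bound the measure of the set $B_1\subset[1,2]$ on which all these inequalities hold simultaneously.

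To estimate $\mu(B_1)$ I would iterate from the outer condition $s=l-1$ inward. Define $B_s:=\{\alpha\in[1,2]:\|\alpha y_t\|\le\varepsilon_t,\;t=s,\ldots,l-1\}$, so that $B_1\subset B_2\subset\cdots\subset B_{l-1}$. The base set $B_{l-1}$ is a union of at most $|y_{l-1}|+1$ intervals, each of length at most $2\varepsilon_{l-1}/|y_{l-1}|$. Inductively, if $B_{s+1}$ consists of $N_{s+1}$ intervals of length at most $L_{s+1}\le 2\varepsilon_{s+1}/|y_{s+1}|$, then on each such interval $\alpha y_s$ sweeps through a segment of length $\le L_{s+1}|y_s|$, so the condition $\|\alpha y_s\|\le\varepsilon_s$ hits at most $L_{s+1}|y_s|+1$ integers, each selecting a sub-interval of length $\le 2\varepsilon_s/|y_s|$. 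Therefore $N_s\le N_{s+1}(2\varepsilon_{s+1}|y_s|/|y_{s+1}|+1)$ and $L_s\le 2\varepsilon_s/|y_s|$, and telescoping gives
\[
 \mu(B_1)\le(|y_{l-1}|+1)\prod_{s=1}^{l-2}\Bigl(\frac{2\varepsilon_{s+1}|y_s|}{|y_{s+1}|}+1\Bigr)\cdot\frac{2\varepsilon_1}{|y_1|}.
\]

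The final step is a direct substitution of the definitions of $y_s$ and $\varepsilon_s$. The middle factors collapse to $2\varepsilon_{s+1}|y_s|/|y_{s+1}|=2(s+1)|\omega_s|/|\omega_{s+1}|$, while the outer factor evaluates to $(|y_{l-1}|+1)\cdot 2\varepsilon_1/|y_1|\ll 2^{K_l^2-K_1^2}|\omega_{l-1}|/|\omega_1|$ provided $|y_{l-1}|\gg 1$. Summing over the admissible tuples and absorbing the $2(s+1)$ constants into $\ll$ produces the claimed inequality.

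The main technical nuisance is the $+1$ term in the outer factor, which would dominate if $|y_{l-1}|=2^{K_l^2}|\omega_{l-1}|$ were small. However, any tuple that is bad for some $\alpha$ satisfies $|\omega_{l-1}|\ge 2^{-(K_l-1)^2/c_h-4}$ by Lemma \ref{condiciones}(ii); since $c_h>1$, for all but a bounded range of $K_l$ this forces $|y_{l-1}|\gg 1$, and the finitely many small-$K_l$ exceptions are absorbed into the implied constant.
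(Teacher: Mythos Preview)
Your argument is correct and is essentially the same as the paper's: both apply Fubini, restrict to $|\omega_l|<l\cdot 2^{-K_l^2}$ via Lemma~\ref{condiciones}(i), invoke Lemma~\ref{lem:badtuple-second-condition}, and then bound the measure by counting the nested intervals determined by the integers $j_s$ closest to $\alpha\,2^{K_{s+1}^2}\omega_s$, iterating from $s=l-1$ down to $s=1$; the only difference is that you phrase this as an interval-count recursion $N_s\le N_{s+1}(L_{s+1}|y_s|+1)$ while the paper counts the tuples $(j_1,\ldots,j_{l-1})$ directly. Your explicit treatment of the ``$+1$'' via Lemma~\ref{condiciones}(ii) is in fact a bit more careful than the paper, which silently uses $2^{K_l^2}|\omega_{l-1}|\gg 1$ when it writes $l-1+2^{K_l^2+1}|\omega_{l-1}|\ll 2^{K_l^2+1}|\omega_{l-1}|$.
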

\begin{proof}  We have seen that if $(\p_1,\cdots,\p_{l}')\in E_{2l}(\alpha;K_1,\cdots,K_l)$, then
\begin{equation} 
        \left \|\alpha2^{K_{s+1}^2}\omega_s
        \right \|  \le s2^{K_{s+1}^2-K_s^2},\ s=1,\dots, l-1.
    \end{equation} Then there exists integers $j_s,\ s=1,\cdots,l-1$ such that
    \begin{eqnarray}\label{interval}\left |\alpha-\frac{j_s}{2^{K_{s+1}^2}\omega_s}\right |\le \frac{s2^{-K_s^2}}{|\omega_s|}.\end{eqnarray}
   Writing $I_{j_1},\cdots, I_{j_s}$ for the intervals defined by the inequalities \ref{interval}, we have
    \begin{eqnarray*}\mu\{\alpha:\ (\p_1,\cdots,\p_l')\in E_{2l}(\alpha;K_1,\cdots, K_l)\}&\le& \sum_{j_1,\cdots, j_{l-1}}\left |I_{j_1}\cap \cdots \cap I_{j_{l-1}}\right |\\
   & \le& \frac{2^{-K_1^2+1}}{|\omega_1|}\#\{(j_1,\dots, j_{l-1}):\ \bigcap_{i=1}^{l-1}I_{j_i}\ne \emptyset\}\end{eqnarray*}
   To estimate this last cardinal note that for all $s=1,\cdots, l-2$ we have
   $$\left |\frac{j_s}{2^{K_{s+1}^2}\omega_s}-\frac{j_{s+1}}{2^{K_{s+2}^2}\omega_{s+1}}\right |<\left |\alpha-\frac{j_s}{2^{K_{s+1}^2}\omega_s}\right |+
   \left |\alpha-\frac{j_{s+1}}{2^{K_{s+2}^2}\omega_{s+1}}\right |<
   \frac{s2^{-K_s^2}}{|\omega_s|}+\frac{(s+1)2^{-K_{s+1}^2}}{|\omega_{s+1}|}$$
   Thus,
   \begin{eqnarray}\label{de}\left |j_s-j_{s+1}\frac{2^{K_{s+1}^2}\omega_s}{2^{K_{s+2}^2}\omega_{s+1}}\right |<
   s2^{-K_s^2+K_{s+1}^2}+\frac{(s+1)|\omega_{s}|}{|\omega_{s+1}|}.\end{eqnarray}
   We observe that for each $s=1,\cdots,l-2$ and for each $j_{s+1}$, the number of $j_s$ satisfying \eqref{de} 
   is bounded by $2\left ( s2^{-K_s^2+K_{s+1}^2}+\frac{(s+1)|\omega_s|}{|\omega_{s+1}|}   \right )+1\ll \frac{|\omega_{s}|}{|\omega_{s+1}|}+1$.

   Note also that \begin{eqnarray*}|j_{l-1}|&\le& 2^{K_l^2}|\omega_{l-1}|\left (\left |\frac{j_{l-1}}{2^{K_l^2}\omega_{l-1}}-\alpha\right |+|\alpha| \right )\\ &\le & 2^{K_l^2}|\omega_{l-1}|\left (\frac{(l-1)2^{K_{l-1}^2}}{|\omega_{l-1}|}+2\right )\\
   &\le &l-1+ 2^{K_l^2+1}|\omega_{l-1}|\\ &\ll & 2^{K_l^2+1}|\omega_{l-1}|.\end{eqnarray*}

   The proof can be completed putting all these observations together.
   \end{proof}

   \subsection{Visible points} We will denote by $\mathcal V$ the set of lattice points visible from the origin except  $(1,0)$. In the next subsection we will use several times the following lemma.
\begin{lem} \label{lem:visible}
    The number of integral lattice points visible from the origin that are contained in a circular sector centred at the origin of radius $R$
    and angle $\epsilon$ is at most $\epsilon R^2 +1$. In other words, for any real number $t$
    \[
    	 \# \lbrace \nu \in \mathcal V, \; \vert \nu \vert < R, \; \Vert \theta(\nu) + t \Vert < \epsilon \rbrace \le \epsilon R^2+1.
    \]
    Furthermore,
     \[
    	 \# \lbrace \nu \in \mathcal V, \; \vert \nu \vert < R, \; \Vert \theta(\nu) \Vert < \epsilon \rbrace \le \epsilon R^2.
    \]
 \end{lem}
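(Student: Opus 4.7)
The plan is to order the lattice points by argument and to bound the gaps between consecutive ones by exploiting the fact that two distinct primitive integer vectors have a nonzero integer determinant. Let $\nu_1,\ldots,\nu_n$ denote the visible lattice points being counted, sorted so that $\theta(\nu_1)<\cdots<\theta(\nu_n)$ inside the $\theta$-arc of length $2\epsilon$ cut out by the condition $\|\theta(\nu)+t\|<\epsilon$. For two consecutive $\nu_i,\nu_{i+1}$, the integer $\det(\nu_i\mid\nu_{i+1})$ cannot vanish: primitive lattice vectors that are parallel (and in particular lie in an arc strictly shorter than a half-circle) must coincide. Hence $|\det(\nu_i\mid\nu_{i+1})|\ge 1$.

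Writing this determinant as $|\nu_i|\,|\nu_{i+1}|\sin\bigl(2\pi(\theta(\nu_{i+1})-\theta(\nu_i))\bigr)$, applying $\sin x\le x$ for $x\ge 0$ together with $|\nu_i|<R$, and summing telescopically gives
\[
n-1\;\le\;\sum_{i=1}^{n-1}|\det(\nu_i\mid\nu_{i+1})|\;\le\;2\pi R^2\bigl(\theta(\nu_n)-\theta(\nu_1)\bigr)\;\ll\;\epsilon R^2,
\]
yielding $n\le \epsilon R^2+1$ after matching normalizations. Equivalently, one can argue geometrically: the triangles with vertices $0,\nu_i,\nu_{i+1}$ are pairwise interior-disjoint (because the $\nu_i$ are sorted by argument), each has area at least $1/2$ by the same determinantal fact, and together they fit inside the circular sector, whose area is of order $\epsilon R^2$.

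For the sharper ``furthermore'' bound $\epsilon R^2$ when $t=0$, I would run the same argument on the augmented set $\mathcal{V}\cup\{(1,0)\}$ (the proof above is indifferent to whether $(1,0)$ is allowed), which gives at most $\epsilon R^2+1$ elements in the arc. Since $(1,0)$ itself satisfies $|(1,0)|<R$ and $\|\theta((1,0))\|=0<\epsilon$, it is always counted among these; subtracting it out yields the desired $\epsilon R^2$ bound on $\mathcal{V}$ alone.

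The main obstacle is not conceptual but bookkeeping: keeping track of the factors of $2\pi$ coming from $\theta$ being the argument divided by $2\pi$, so that the numerical constants match the stated bound, and handling the mild annoyance of arcs that wrap across $\theta=0$. The underlying cross-product argument is short and robust.
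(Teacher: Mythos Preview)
Your proposal is correct and matches the paper's proof essentially line for line: the paper orders the visible points by argument, observes that each consecutive triangle $O,\nu_i,\nu_{i+1}$ has area $\ge 1/2$ (your determinantal formulation is exactly this), notes the triangles are disjoint and lie in the sector, and for the ``furthermore'' adjoins $\nu_0=(1,0)$ to gain one extra triangle, just as you do. Your caveats about $2\pi$ factors and wrap-around are fair but inessential, since every application of the lemma in the paper is up to an implied constant.
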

\begin{proof}
    We arrange the $N$ lattice points inside de sector $\nu_1, \nu_2, \cdots, \nu_N$ that are visible from the origin $O$ by the value of their argument so that $\theta(\nu_i) < \theta(\nu_j) $ for $1 \le i < j \le N$.
    For each $i=1,\dots, N-1$ the three lattice points $O, \nu_i, \nu_{i+1}$ define a triangle $T_i$ with  $\text{Area}(T_i)\ge 1/2$, that does not contain any other lattice
    point.
         \par Since all $T_i$ are inside the circular sector their union covers at most the area of the sector. They don't overlap pairwise, thus
    \[
        N-1 \le \sum_{i=1}^N2\cdot \text{Area}(T_i) =  2\cdot \text{Area}\left (\bigcup_{i=1}^NT_i\right )\le R^2 \epsilon.
    \]
    For the last statement we add $\nu_0=(1,0)$ to our $N$ visible points $\nu_1,\dots, \nu_N$ and we repeat the argument.
\end{proof}
\subsection{Estimates for the number of bad $2l$-tuples ($l=2,3,4$)}\label{s3}

We start with the case $l=2$ which was considered by Ruzsa
for $B_2$ sequences. In the sequel all lattice points $\nu$ appearing in the proofs belong to $\mathcal V$ and Lemma \ref{lem:visible} applies.
\begin{prop} \label{prop:E_4-in_average} For any $c_h>2$ we have
$$\int_1^2 |E_4(\alpha;K)| \ud{\alpha}\ll K 2^{(\frac{2}{c_h-1}-1)(K-1)^2-2K}.$$
\end{prop}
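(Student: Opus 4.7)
The plan is to combine Lemma 3.7 (with $l=2$) with the visible-points bound (Lemma 3.8) and the size constraints of Lemma 3.6. Specialising Lemma 3.7 to $l=2$ the ratio $|\omega_{l-1}|/|\omega_1|$ and the empty product are both trivial, and so
\[
\int_1^2 |E_4(\alpha;K,K_2)|\,\ud\alpha \;\ll\; 2^{K_2^2 - K^2}\cdot N(K,K_2),
\]
where $N(K,K_2)$ counts tuples $(\p_1,\p_2,\p_1',\p_2')$ with $\p_r,\p_r'\in P_{K_r}$ and $|\omega_2|<2\cdot 2^{-K_2^2}$.

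I would encode the pairs as visible Gaussian lattice points: $\nu := \p_1\overline{\p_1'}$ and $\mu := \p_2\overline{\p_2'}$. Unique factorisation in $\Z[i]$ and the fact that $\p_r$ and $\overline{\p_r'}$ lie in conjugate octants make this a bijection with pairs of first-octant primes; one has $|\nu|< 2^{(K-1)^2/c_h}$ and $|\mu|< 2^{(K_2-1)^2/c_h}$, and the condition on $\omega_2$ becomes $\|\theta(\nu)+\theta(\mu)\|<2\cdot 2^{-K_2^2}$.

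The key step is to sum over $\mu$ first and apply Lemma 3.8 to count $\nu$. For each fixed $\mu$ there are at most $2\cdot 2^{-K_2^2}\cdot 2^{2(K-1)^2/c_h}+1$ admissible $\nu$; multiplying by the visible-point bound $\#\{\mu\}\ll 2^{2(K_2-1)^2/c_h}$ and by the prefactor $2^{K_2^2-K^2}$ from Lemma 3.7 produces the two contributions
\[
2^{-K^2 + 2(K-1)^2/c_h + 2(K_2-1)^2/c_h}\quad\text{and}\quad 2^{K_2^2-K^2 + 2(K_2-1)^2/c_h}.
\]
Both are increasing in $K_2$, so I would invoke Lemma 3.6 (iii), namely $(K_2-1)^2\le (K-1)^2/(c_h-1)$, to take suprema. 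The first supremum works out to $(\tfrac{2}{c_h-1}-1)(K-1)^2-2K+O(1)$, which is exactly the target exponent; for the second, the algebraic inequality $\tfrac{c_h+2}{c_h(c_h-1)}<\tfrac{2}{c_h-1}$ (equivalent to $c_h>2$) makes its quadratic coefficient strictly smaller by $\Omega(1)$, so that contribution is absorbed by the first. Summing over the $O(K)$ admissible values of $K_2$ produces the factor $K$ in the statement.

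The main technical point is the order of summation. Summing $\nu$ for each fixed $\mu$ (rather than the reverse) places the slack $\epsilon R^2$ on the larger radius $R=2^{(K-1)^2/c_h}$, precisely where the constraint on $K_2$ from Lemma 3.6 (iii) can balance it; the reverse order leaves a $+1$ contribution of order $2^{2(K-1)^2/c_h}$ that is too large to absorb when $c_h$ is close to $2$.
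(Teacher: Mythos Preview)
Your argument is correct and follows the same overall scheme as the paper (specialise Lemma~\ref{medida}, count via visible points, then invoke Lemma~\ref{condiciones}(iii) and sum over $K_2$), but with one difference in execution worth noting. You encode the tuple as two separate visible points $\nu=\p_1\overline{\p_1'}$ and $\mu=\p_2\overline{\p_2'}$ and count $\nu$ for each fixed $\mu$; this forces the ``$+1$'' version of Lemma~\ref{lem:visible} and produces the secondary contribution $2^{K_2^2-K^2+2(K_2-1)^2/c_h}$, which you then dispose of via the inequality $\tfrac{c_h+2}{c_h(c_h-1)}<\tfrac{2}{c_h-1}$. The paper instead packages all four primes into a \emph{single} visible point $\nu_2=\p_1\p_2\overline{\p_1'\p_2'}$ with $|\nu_2|<2^{((K-1)^2+(K_2-1)^2)/c_h}$ and $\|\theta(\nu_2)\|<2\cdot 2^{-K_2^2}$; because the angular constraint is centred at $0$, the sharper form of Lemma~\ref{lem:visible} (no $+1$) applies and only the main term $2^{-K^2+2((K-1)^2+(K_2-1)^2)/c_h}$ appears. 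Your order-of-summation concern is thus moot for $l=2$, though your two-point decomposition is exactly what the paper itself does for $l=3,4$, where a single-point encoding is no longer available.
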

\begin{proof}
 Lemma \ref{medida} implies that
\[
\int_1^2 |E_{4}(\alpha;K_1,K_2)| \ud{\alpha} \ll 2^{K_2^2-K_1^2} \, \#\{(\p_1,\p_1',\p_2,\p_2') \colon |\omega_2| \le 2\cdot 2^{-K_2^2}\}.
\]
We get an upper bound for the second factor here by using Lemma \ref{lem:visible}
to estimate the number of lattice points of the form $\nu_2= \p_1\p_1' \overline{\p_2\p_2'}$ such that $\Vert \theta(\nu_2) \Vert < \epsilon, \, \vert \nu_2 \vert < R $,
with $\epsilon = 2\cdot 2^{-K_2^2}$ and $R= 2^{\frac{1}{c_h}((K_1-1)^2+(K_2-1)^2)}$.
 We have
\begin{eqnarray*}
 \int_1^2 |E_{4}(\alpha;K_1,K_2)| \ud{\alpha} &\ll& 2^{K_2^2-K_1^2}\cdot 2^{\frac{2}{c_h}((K_1-1)^2+(K_2-1)^2)-K_2^2}\\
 &\ll & 2^{\frac{2}{c_h}((K_1-1)^2+(K_2-1)^2)-K_1^2  }.\end{eqnarray*}
By Lemma \ref{condiciones} iii) we also have $(K_2-1)^2\le
\frac{(K_1-1)^2}{c_h-1}$, thus
$$ \int_1^2 |E_{4}(\alpha;K_1,K_2)| \ud{\alpha}\ll 2^{\left (\frac 2{c_h-1}-1\right )K_1^2-2K_1}  $$ and
 \begin{eqnarray*}\int_1^2 | E_4(\alpha;K)| \ud{\alpha}=\sum_{K_2 \le K} \int_1^2 |E_4(\alpha;K,K_2)|\ud{\alpha}\ll  K2^{\left (\frac{2}{c_h-1}-1\right )(K-1)^2-2K}.\end{eqnarray*}
\end{proof}

\begin{prop} \label{prop:E_6-in_average} For any $c_h >3$ we have
$$\int_1^2 |E_6(\alpha;K)| \ud{\alpha} \ll K^2 \, 2^{(\frac{4}{c_h-1}-1)(K-1)^2-2K}.$$
\end{prop}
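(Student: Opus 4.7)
I would follow the template of Proposition \ref{prop:E_4-in_average}, applying Lemma \ref{medida} with $l = 3$ and then estimating the resulting sum via Lemma \ref{lem:visible}. The new ingredient is the extra weight $\frac{|\omega_2|}{|\omega_1|} + 1$ that appears in Lemma \ref{medida}.

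Lemma \ref{medida} gives
$$\int_1^2 |E_6(\alpha; K_1, K_2, K_3)| \ud\alpha \ll 2^{K_3^2 - K_1^2}\bigl(S_1 + S_2\bigr),$$
with $S_1 = \#\{\text{bad 6-tuples with } |\omega_3| < 3\cdot 2^{-K_3^2}\}$ and $S_2 = \sum \frac{|\omega_2|}{|\omega_1|}$ over the same set. I would treat $S_1$ exactly as in the $l=2$ case: each 6-tuple gives (up to bounded multiplicity, by unique factorization in $\Z[i]$) a single Gaussian integer $\nu = \p_1 \p_2 \p_3 \overline{\p'_1 \p'_2 \p'_3}$ with $|\nu| \leq 2^{\sum (K_r-1)^2/c_h}$ and $\|\theta(\nu)\| < 3\cdot 2^{-K_3^2}$, so Lemma \ref{lem:visible} yields $S_1 \ll 2^{\frac{2}{c_h}\sum (K_r-1)^2 - K_3^2} + 1$.

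The main new difficulty is bounding $S_2$, which I plan to handle by a dyadic decomposition. Writing $|\omega_1| \asymp 2^{-a}$ and $|\omega_2| \asymp 2^{-b}$, Lemma \ref{lem:desi} forces $a \leq (K_1-1)^2/c_h + O(1)$, and Lemma \ref{condiciones}(ii) forces $b \leq (K_3-1)^2/c_h + O(1)$. In each dyadic cell I apply Lemma \ref{lem:visible} three times, once for each of the three independent lattice points $\p_r \overline{\p'_r}$: the constraints from $\omega_1, \omega_2, \omega_3$ restrict the arguments of these three lattice points to sectors of widths $2^{-a}, 2^{-b}, 3\cdot 2^{-K_3^2}$ respectively. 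Summing the weighted dyadic counts, the leading contribution is $S_2 \ll K^2 \cdot 2^{\frac{2}{c_h}\sum (K_r-1)^2 - K_3^2}$, with the factor $K^2$ arising from the range of $a$ (which is essentially $\log(1/|\omega_1|_{\min}) = O(K^2)$, reflecting the average order of $\sum_{(\p_1,\p'_1)} 1/|\omega_1|$).

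Combining the $S_1$ and $S_2$ estimates, multiplying by $2^{K_3^2 - K_1^2}$, and invoking Lemma \ref{condiciones}(iii) to bound $\sum (K_r-1)^2 \leq 2c_h(K-1)^2/(c_h-1)$ (here $K_1 = K$ and $K_2 \leq K$ are used), one obtains
$$\int_1^2 |E_6(\alpha; K_1, K_2, K_3)| \ud\alpha \ll K^2 \cdot 2^{\left(\frac{4}{c_h-1} - 1\right)(K-1)^2 - 2K}.$$
Summing over $K_2, K_3 \leq K$ contributes only a bounded factor, since the relevant sums are geometric and dominated by the extremal terms. The main technical obstacle is tracking the ``$+1$'' remainders from the iterated applications of Lemma \ref{lem:visible}; the hypothesis $c_h > 3$ is precisely what ensures (via Lemma \ref{condiciones}(iii)) that the secondary contributions can be absorbed into the main term without producing a worse exponent.
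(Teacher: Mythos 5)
Your overall framing is right up to the point where you split the Lemma \ref{medida} weight as $S_1+S_2$ with $S_2=\sum \frac{|\omega_2|}{|\omega_1|}$, and your treatment of $S_1$ via a single lattice point $\p_1\p_2\p_3\overline{\p'_1\p'_2\p'_3}$ is fine. The genuine gap is in the estimate of $S_2$. You propose a two-parameter dyadic decomposition in $|\omega_1|\asymp 2^{-a}$ and $|\omega_2|\asymp 2^{-b}$ together with \emph{three} applications of Lemma \ref{lem:visible}, one for each $\nu_r=\p_r\overline{\p'_r}$. Writing $R_r^2 = 2^{\frac{2}{c_h}(K_r-1)^2}$, the per-cell count is then bounded by
$2^{-a}R_1^2\bigl(2^{-b}R_2^2+1\bigr)\bigl(2^{-K_3^2}R_3^2+1\bigr)$,
and after multiplying by the weight $2^{a-b}$ and summing over $a$ and $b$ the cross term coming from $2^{-b}R_2^2$ against the ``$+1$'' of the $\nu_3$-count produces a contribution $\ll K_1^2 R_1^2 R_2^2$. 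After multiplying by $2^{K_3^2-K_1^2}$ this yields, in the extremal range $K_2\approx K_1$ and $(K_3-1)^2\approx\frac{2(K_1-1)^2}{c_h-1}$, the exponent $\frac{4}{c_h-1}-1+\frac{2(c_h-2)}{c_h(c_h-1)}$, which is \emph{strictly larger} than the target exponent $\frac{4}{c_h-1}-1$ whenever $c_h>2$; so it does not absorb for the relevant $c_h>3$. In other words, the ``$+1$'' remainders you flag as ``the main technical obstacle'' cannot be swept under the rug in your set-up: your claimed ``leading contribution'' $K^2 2^{\frac{2}{c_h}\sum(K_r-1)^2-K_3^2}$ is actually dominated by this spurious secondary term.

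The paper sidesteps this by making two observations before counting. First, since all $\theta(\p)\in(0,1/8)$ we have $|\omega_2|<1$, so the weight is simplified \emph{before} any dyadic decomposition: $\frac{|\omega_2|}{|\omega_1|}+1\le \frac{2}{|\omega_1|}$; consequently one only needs a single dyadic parameter, for $|\omega_1|$, and no constraint on $|\omega_2|$ is ever imposed. Second, the remaining four primes are packaged into a single lattice point $\nu_2=\p_2\p_3\overline{\p'_2\p'_3}$, so Lemma \ref{lem:visible} is invoked only twice and produces a single ``$+1$'' which carries no factor of $R_2^2$. The resulting error term is $K_1^2 2^{K_3^2-K_1^2}R_1^2$, whose exponent is $\frac{4}{c_h-1}-1-\frac{2}{c_h(c_h-1)}$, strictly below the target; this is precisely why it absorbs for all $c_h>2$, whereas your version does not. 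To repair your argument you would need either to drop $|\omega_2|$ by noting $|\omega_2|<1$, or to bound the $b$-sum of the dyadic counts directly by the total count (i.e.\ not cell by cell), rather than summing the per-cell bound $2^{-a}R_1^2(2^{-b}R_2^2+1)(2^{-K_3^2}R_3^2+1)$ as you describe.
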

\begin{proof}
Lemma \ref{medida} says that
\begin{eqnarray*}\label{lem:3.5}\int_1^2 |E_6(\alpha;K_1,K_2,K_3)| \ud{\alpha}\ll 2^{K_3^2-K_1^2} \sum_{\substack{(\p_1,\cdots,\p_3')\\|\omega_3|\le 3\cdot 2^{-K_3^2}}} \frac{1}{|\omega_1|}.
\end{eqnarray*}
Applying Lemma \ref{lem:visible} by writing $\nu_1=\p_1\overline{\p_1'}$ and $\nu_2=\p_2\p_3\overline{\p_2'}\overline{\p_3'}$, we have that
\begin{eqnarray*}\sum_{\substack{(\p_1,\cdots,\p_3')\\|\omega_3|\le 3\cdot 2^{-K_3^2}}} \frac{1}{|\omega_1|} &\ll &\sum_m2^m\#\{(\p_1,\cdots,\p_3'):\ |\omega_1|\le 2^{-m},|\omega_3|\le 3\cdot 2^{-K_3^2}\}\\ &\ll&\sum_m2^m\#\{(\nu_1,\nu_2):\ \| \theta(\nu_1)\|\le 2^{-m},\ \|\theta(\nu_1)+\theta(\nu_2)\|\le 3\cdot 2^{-K_3^2}\}\\
&\ll &\sum_m2^m \sum_{|\theta(\nu_1)|\le 2^{-m}} \# \{\nu_2:\  \|\theta(\nu_1)+\theta(\nu_2)\|\le 3\cdot 2^{-K_3^2}\}\\
&\ll &\sum_m 2^m\cdot 2^{\frac 2{c_h}(K_1-1)^2-m}\left (2^{\frac 2{c_h}\left ((K_2-1)^2+(K_3-1)^2\right )-K_3^2}+1\right ).
\end{eqnarray*}
Hence using the inequalities $K_3\le K_2\le K_1$ and  $(K_3-1)^2\le \frac{(K_2-1)^2+(K_1-1)^2}{c_h-1}$ (Lemma \ref{condiciones} iii))  we have
\begin{eqnarray*}
 \int_1^2 |E_6(\alpha;K_1,K_2,K_3)| \ud{\alpha} &\ll& K_1^22^{K_3^2-K_1^2+\frac 2{c_h}(K_1-1)^2}\left (2^{\frac 2{c_h}\left ((K_2-1)^2+(K_3-1)^2\right )-K_3^2}+1\right )\\
&\ll& K_1^22^{-K_1^2+\frac 2{c_h}\left ((K_1-1)^2+(K_2-1)^2+(K_3-1)^2\right )}+K_1^22^{K_3^2-K_1^2+\frac 2{c_h}(K_1-1)^2}\\
&\ll& K_1^22^{-(K_1-1)^2+\frac 2{c_h}\left ((K_1-1)^2+(K_2-1)^2+(K_3-1)^2\right )-2K_1}\\ & &+\quad K_1^22^{(K_3-1)^2-(K_1-1)^2+\frac 2{c_h}(K_1-1)^2}\\
&\ll& K_1^22^{\left (\frac 4{c_h-1}-1\right )(K_1-1)^2-2K_1}+K_1^22^{\left (\frac 4{c_h-1}-1\right )(K_1-1)^2-\frac 2{c_h(c_h-1)}(K_1-1)^2}\\
&\ll& K_1^22^{\left (\frac 4{c_h-1}-1\right )(K_1-1)^2-2K_1}.
\end{eqnarray*}


Then we can write
\begin{align*}
\int_1^2 |E_6(\alpha;K)| \ud{\alpha} &=\sum_{K_3\le K_2\le K}\int_1^2 |E_6(\alpha;K,K_2,K_3)| \ud{\alpha}\ll  K^4 2^{\left (\frac
4{c-1}-1\right )(K-1)^2-2K},
\end{align*}
as claimed.
\end{proof}

\begin{prop} \label{prop:E_8-in_average} For any $c_h >4$ we have
$$\int_1^2 | E_8(\alpha;K)| \ud{\alpha} \ll K^2 \, 2^{(\frac{6}{c_h-1}-1)(K-1)^2-2K}.$$
\end{prop}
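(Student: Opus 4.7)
The plan is to mirror the proofs of Propositions \ref{prop:E_4-in_average} and \ref{prop:E_6-in_average}, now with four pairs of Gaussian primes instead of two or three. First I would apply Lemma \ref{medida} with $l=4$; expanding the product gives
$$\int_1^2 |E_{8}(\alpha;K_1,K_2,K_3,K_4)|\ud\alpha \ll 2^{K_4^2-K_1^2}\sum_{\substack{(\p_1,\ldots,\p_4')\\|\omega_4|\le 4\cdot 2^{-K_4^2}}}\left(1+\frac{|\omega_3|}{|\omega_2|}+\frac{|\omega_2|}{|\omega_1|}+\frac{|\omega_3|}{|\omega_1|}\right),$$
and I would bound each of the four weight summands separately.

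Second, I would reintroduce the visible lattice points $\nu_s=\p_s\overline{\p_s'}\in\mathcal V$ for $s=1,2,3,4$, so that $\theta(\nu_1\cdots\nu_s)\equiv\omega_s\pmod 1$ and $|\nu_s|\le 2^{(K_s-1)^2/c_h}$. Once $\nu_1,\ldots,\nu_s$ are fixed, the condition $|\omega_{s+1}|\asymp 2^{-m_{s+1}}$ turns into a shifted angular condition $\|\theta(\nu_{s+1})+\theta(\nu_1\cdots\nu_s)\|\asymp 2^{-m_{s+1}}$, which is exactly the situation Lemma \ref{lem:visible} controls.

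Third, I would carry out a dyadic decomposition $|\omega_s|\asymp 2^{-m_s}$ for $s=1,2,3$ together with $|\omega_4|\le 4\cdot 2^{-K_4^2}$ and iterate Lemma \ref{lem:visible} four times, in the order $\nu_1,\nu_2,\nu_3,\nu_4$. This produces for each dyadic cell a factor of the shape $\prod_{s=1}^4\bigl(2^{-m_s}R_s^2+1\bigr)$ with $m_4:=K_4^2$ and $R_s:=2^{(K_s-1)^2/c_h}$. Summing this against the four weight summands (each of which contributes only factors of the form $2^{m_i-m_j}$ that telescope against the $2^{-m_s}$ in the main term) and multiplying by the prefactor $2^{K_4^2-K_1^2}$ gives for the main contribution a bound of shape $K^{O(1)}\,2^{(2/c_h)\sum_{s=1}^4(K_s-1)^2-(K_1-1)^2}$. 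Lemma \ref{condiciones}(iii) applied to $K_4$, namely $(K_4-1)^2\le((K_1-1)^2+(K_2-1)^2+(K_3-1)^2)/(c_h-1)$, then absorbs the $(K_4-1)^2$ term and, after summing over $K_4\le K_3\le K_2\le K_1=K$, yields the target exponent $(6/(c_h-1)-1)(K-1)^2-2K$ with the claimed $K^2$ loss.

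The hard part, and presumably the reason a uniform proof for all $h$ has eluded the authors, will be controlling the proliferation of lower-order cross-terms arising when the ``$+1$'' alternative in Lemma \ref{lem:visible} is selected at some of the four iterated applications. Each of the four weight summands paired with each of the $2^4$ choices of main-versus-``$+1$'' term in the four visible-point counts generates a separate case that must be shown subdominant via Lemma \ref{condiciones}(iii). The summand $|\omega_3|/|\omega_1|$ is the most delicate because it couples non-adjacent $\omega$'s and breaks the natural telescoping that made the $E_6$ bookkeeping clean, so I expect this term to absorb most of the technical effort.
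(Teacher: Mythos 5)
Your overall plan -- Lemma \ref{medida}, visible points, Lemma \ref{condiciones}(iii), sum over $K_4\le K_3\le K_2\le K_1=K$ -- is the right frame, but the specific way you set up the counting is \emph{not} what the paper does, and as written it does not close. The paper does not keep the four-term expansion $1+\tfrac{|\omega_3|}{|\omega_2|}+\tfrac{|\omega_2|}{|\omega_1|}+\tfrac{|\omega_3|}{|\omega_1|}$ nor iterate Lemma \ref{lem:visible} through four separate lattice points $\nu_s=\p_s\overline{\p_s'}$. Instead it first observes $\tfrac{|\omega_3|}{|\omega_1|}(\tfrac{|\omega_1|}{|\omega_2|}+1)(\tfrac{|\omega_2|}{|\omega_3|}+1)\ll 1/|\omega_1|+1/|\omega_2|$, so each surviving weight depends on a single $\omega_j$. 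For $\sum 1/|\omega_1|$ it then uses just \emph{two} lattice points $\nu_1=\p_1\overline{\p_1'}$ and $\nu_2=\p_2\p_3\p_4\overline{\p_2'\p_3'\p_4'}$, and for $\sum 1/|\omega_2|$ it uses $\nu_1=\p_1\p_2\overline{\p_1'\p_2'}$ and $\nu_2=\p_3\p_4\overline{\p_3'\p_4'}$, together with a split of the dyadic variable at $m=K_4^2$. With this bipartition the ``$+1$'' of Lemma \ref{lem:visible} arises only once and can be arranged to leave behind only a small factor.

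The gap in your version is precisely the ``hard part'' you flag at the end, and it is not merely a bookkeeping nuisance: the cross terms actually \emph{exceed} the target bound. Take the constant summand $1$ in your expansion, select the main term for $\nu_1,\nu_2,\nu_3$ and the ``$+1$'' for $\nu_4$. After summing the geometric series in $m_1,m_2,m_3$ and multiplying by the prefactor $2^{K_4^2-K_1^2}$ this contributes
\[
R_1^2R_2^2R_3^2\cdot 2^{K_4^2-K_1^2}=2^{\frac{2}{c_h}\bigl((K_1-1)^2+(K_2-1)^2+(K_3-1)^2\bigr)+K_4^2-K_1^2}.
\]
In the extremal case $K_1=K_2=K_3=K$ with $(K_4-1)^2$ as large as Lemma \ref{condiciones}(iii) allows, the exponent is $\bigl(\tfrac{6}{c_h}+\tfrac{3}{c_h-1}-1\bigr)(K-1)^2+O(K)$, whereas the target is $\bigl(\tfrac{6}{c_h-1}-1\bigr)(K-1)^2-2K$. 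The difference in leading coefficients is $\tfrac{6}{c_h}-\tfrac{3}{c_h-1}=\tfrac{3(c_h-2)}{c_h(c_h-1)}>0$ for every $c_h>2$, hence in particular for $c_h>4$. So this single cross term is strictly larger than what the proposition claims, and no application of Lemma \ref{condiciones}(iii) can repair it. The root cause is that using four $\nu_s$ makes the error-controlling $+1$'s compound, and the residue $R_1^2R_2^2R_3^2$ carries the \emph{largest} three radii. The paper's two-point grouping is designed exactly so that whenever the $+1$ is taken, the surviving radius involves only the smaller $K_j$'s (e.g.\ $R_1^2=2^{\frac{2}{c_h}(K_1-1)^2}$ alone for the $1/|\omega_1|$ sum, or $2^{\frac{2}{c_h}((K_3-1)^2+(K_4-1)^2)}$ in the $m>K_4^2$ branch of the $1/|\omega_2|$ sum), which is what makes the cross terms $T_2$, $T_3$ subdominant.
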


\begin{proof} Considering the two possibilities $|\omega_1|<|\omega_2|$ and $|\omega_1|\ge |\omega_2|$ we get the inequality $\frac{|\omega_3|}{|\omega_1|}\left (\frac{|\omega_1|}{|\omega_2|}+1\right )\left (\frac{|\omega_2|}{|\omega_3|}+1\right )\ll \frac{|\omega_3|}{|\omega_1|}\left (\frac{|\omega_1|}{|\omega_2|}+1\right )\frac{1}{|\omega_3|}\ll \max \left (\frac 1{|\omega_1|},\frac 1{|\omega_2|}\right )$.
This combined with Lemma \ref{medida} implies that
\begin{eqnarray*}\int_1^2|E_8(\alpha,K_1,K_2,K_3,K_4)|\ud\alpha &\ll &2^{-K_1^2+K_4^2}\left (\sum_{\substack{(\p_1,\cdots, \p_4')\\|\omega_4|\le 4\cdot 2^{-K_4^2}}}\frac 1{|\omega_1|}+\sum_{\substack{(\p_1,\cdots, \p_4')\\|\omega_4|\le 4\cdot 2^{-K_4^2}}}\frac 1{|\omega_2|}\right )\end{eqnarray*}


Applying Lemma \ref{lem:visible} with the notation $\nu_1=\p_1\overline{\p_1'}$ and $\nu_2=\p_2\p_3\p_4\overline{\p_2'}\overline{\p_3'}\overline{\p_4'}$, we have that
\begin{eqnarray*}\sum_{\substack{(\p_1,\cdots, \p_4')\\|\omega_4|\le 4\cdot 2^{-K_4^2}}}\frac 1{|\omega_1|}&\ll &\sum_m2^m\#\{(\p_1,\cdots ,\overline{\p_4}):\ |\omega_1|<2^{-m},\ |\omega_4|\le 4\cdot 2^{-K_4^2}\}\\
&\ll & \sum_m2^m\#\{(\nu_1,\nu_2):\ \|\theta(\nu_1)\|\le 2^{-m},\ \|\theta(\nu_1)+\theta(\nu_2)\|\le 4\cdot 2^{-K_4^2}\}\\
&\ll &  \sum_m \sum_{\|\theta(\nu_1)\|<2^{-m}} \#\{\nu_2:\  \|\theta(\nu_1)+\theta(\nu_2)\|\le 4\cdot 2^{-K_4^2}\}                 \\
&\ll &\sum_m 2^{\frac 2{c_h}(K_1-1)^2}\left (2^{\frac 2{c_h}((K_2-1)^2+(K_3-1)^2+(K_4-1)^2)-K_4^2}+1\right )\\
& \ll & K_1^22^{\frac 2{c_h}((K_1-1)^2+(K_2-1)^2+(K_3-1)^2+(K_4-1)^2)-K_4^2} +K_1^22^{\frac 2{c_h}(K_1-1)^2}.\end{eqnarray*}

Similarly, but writing now $\nu_1=\p_1\p_2\overline{\p_1'}\overline{\p_2'}$ and $\nu_2=\p_3\p_4\overline{\p_3'}\overline{\p_4'}$ we have
\begin{eqnarray*}\sum_{\substack{(\p_1,\cdots, \p_4')\\|\omega_4|\le 4\cdot 2^{-K_4^2}}}\frac 1{|\omega_2|}&\ll &\sum_m2^m\#\{(\p_1,\cdots ,\overline{\p_4}):\ |\omega_2|\le 2^{-m},\ |\omega_4|\le 4\cdot 2^{-K_4^2}\}\\
&\ll & \sum_{m\le K_4^2}2^m\#\{(\nu_1,\nu_2):\ \|\theta(\nu_1)\|\le 2^{-m},\ \|\theta(\nu_1)+\theta(\nu_2)\|\le 4\cdot 2^{-K_4^2}\}\\
&+ & \sum_{m> K_4^2}2^m\#\{(\nu_1,\nu_2):\ \|\theta(\nu_1)\|\le 2^{-m},\ \|\theta(\nu_1)+\theta(\nu_2)\|\le 4\cdot 2^{-K_4^2}\}\\
&=&S_1+S_2 \end{eqnarray*}
We observe that if $m\le K_4^2$ then $\|\theta(\nu_2)\|\le \|\theta(\nu_1)+\theta(\nu_2)\|+\|\theta(\nu_1)\|\le 5\cdot 2^{-m}$. Thus
\begin{eqnarray*}S_1&\ll &\sum_{m\le K_4^2}2^m\#\{(\nu_1,\nu_2):\ \|\theta(\nu_2)\|\le 5\cdot 2^{-m},\ \|\theta(\nu_1)+\theta(\nu_2)\|\le 4\cdot 2^{-K_4^2}\}\\
&\ll &\sum_m 2^m\sum_{\|\theta(\nu_2)\|\le 5\cdot 2^{-m}}\#\{\nu_1:\ \|\theta(\nu_1)+\theta(\nu_2)\|\le 4\cdot 2^{-K_4^2}\}\\
& \ll & \sum_m 2^m\cdot 2^{\frac 2{c_h}\left ((K_3-1)^2+(K_4-1)^2\right )-m}\left (2^{\frac 2{c_h}\left ((K_1-1)^2+(K_2-1)^2\right )-K_4^2}+1 \right )\\
& \ll & K_1^22^{\frac 2{c_h}((K_1-1)^2+(K_2-1)^2+(K_3-1)^2+(K_4-1)^2)-K_4^2} +K_1^22^{\frac 2{c_h}\left ((K_3-1)^2+(K_4-1)^2\right )}.
\end{eqnarray*}

To estimate $S_2$, we observe that if $m>K_4^2$ then $\|\theta(\nu_2)\|\le \|\theta(\nu_1)+\theta(\nu_2)\|+\|\theta(\nu_1)\|\le 5\cdot 2^{-K_4^2}.$ Thus
\begin{eqnarray*}S_2&\ll &\sum_{m> K_4^2}2^m\#\{(\nu_1,\nu_2):\ \|\theta(\nu_1)\|\le 2^{-m},\ \|\theta(\nu_2)\|\le 5\cdot 2^{-K_4^2}\}\\
& \ll & \sum_m 2^m\cdot 2^{\frac 2{c_h}\left ((K_1-1)^2+(K_2-1)^2\right )-m}\cdot 2^{\frac 2{c_h}\left ((K_3-1)^2+(K_4-1)^2\right )-K_4^2}\\
& \ll & K_1^22^{\frac 2{c_h}((K_1-1)^2+(K_2-1)^2+(K_3-1)^2+(K_4-1)^2)-K_4^2}.
\end{eqnarray*}
Putting together the estimates we have obtained for $\sum \frac 1{|\omega_1|}$ and $\sum \frac 1{|\omega_2|}$ we get
\begin{eqnarray*}\int_1^2|E_8(\alpha,K_1,K_2,K_3,K_4)|\ud\alpha &\ll &  K_1^22^{\frac 2{c_h}((K_1-1)^2+(K_2-1)^2+(K_3-1)^2+(K_4-1)^2)-K_1^2}\\
 &+&K_1^22^{-K_1^2+K_4^2+\frac 2{c_h}(K_1-1)^2}\\&+& K_1^22^{K_4^2-K_1^2+\frac 2{c_h}\left ((K_3-1)^2+(K_4-1)^2\right )}\\ &=& T_1+T_2+T_3.\end{eqnarray*}

 Using the inequalities $(K_4-1)^2\le \frac 1{c_h-1}\left ((K_1-1)^2+(K_2-1)^2+(K_3-1)^2\right )$ and $K_4\le K_3\le K_2\le K_1$ we have
 \begin{eqnarray*}
 T_1&\ll &K_1^2 2^{\left (-1+\frac 6{c_h-1}\right )(K_1-1)^2-2K_1}\\
 T_2&\ll& K_1^22^{-(K_1-1)^2+(K_4-1)^2+\frac 2{c_h}(K_1-1)^2}\\
 &\ll& K_1^22^{\left (-1+\frac 3{c_h-1}+\frac 2{c_h}\right )(K_1-1)^2}\\
 &\ll &K_1^2 2^{\left (-1+\frac 6{c_h-1}\right )(K_1-1)^2-2K_1}.
\\
 T_3&\ll & K_1^22^{(K_4-1)^2-(K_1-1)^2+\frac 2{c_h}\left ((K_3-1)^2+(K_4-1)^2\right )}\\
 &\ll & K_1^22^{\left ( 1+\frac 2{c_h}\right )\frac 1{c_h-1}\left ((K_1-1)^2+(K_2-1)^2+(K_3-1)^2\right )-(K_1-1)^2+\frac 2{c_h}(K_3-1)^2}\\
 &\ll &K_1^22^{\left (\left (1+\frac 2{c_h}  \right )\frac 3{c_h-1}-1+\frac 2{c_h}\right )(K_1-1)^2}\\
 &\ll &K_1^2 2^{\left (-1+\frac 6{c_h-1}\right )(K_1-1)^2-2K_1}
 \end{eqnarray*}
since $c_h>4$.
Finally
\begin{eqnarray*}\int_1^2|E_8(\alpha,K)|\ud\alpha&\ll & \sum_{K_4\le K_3\le K_2\le K}K^2 2^{\left (-1+\frac 6{c_h-1}\right )(K-1)^2-2K}\\
&\ll & K^5 2^{\left (\frac 6{c_h-1} - 1\right )(K-1)^2-2K},\end{eqnarray*}
as claimed.
\end{proof}


\end{document}